\newtheorem{theorem}{Theorem}[section]
\newtheorem{corollary}[theorem]{Corollary}
\newtheorem{lemma}[theorem]{Lemma}
\newtheorem{proposition}[theorem]{Proposition}
\theoremstyle{definition}
\newtheorem{definition}[theorem]{Definition}
\newtheorem{remark}[theorem]{Remark}
\newtheorem{example}[theorem]{Example}
\numberwithin{equation}{section}
\newcommand{\bc}{\begin{center}}
\newcommand{\ec}{\end{center}}
\begin{document}


\title[The Method of Monotone Iterations for Mixed Monotone Operators]%
{\large The Method of Monotone Iterations for Mixed Monotone Operators
in Partially Ordered Sets and Order-Attractive Fixed Points}
\author[]{Mircea--Dan Rus}
\date{\ }
\maketitle

\vspace*{-0.5cm}

\begin{center}
{\footnotesize
Department of Mathematics, Technical University of Cluj-Napoca\\
Str. Memorandumului nr. 28, 400114 Cluj-Napoca, Romania\\
E-mail: rus.mircea@math.utcluj.ro}
\end{center}

\bigskip

{\footnotesize
\noindent
{\bf Abstract.}
We use the method of monotone iterations to obtain fixed point and coupled
fixed point results for mixed monotone operators in the setting
of partially ordered sets, with no additional assumptions on the partial order
and with no convergence structure. We define the concept of attractive
fixed point with respect to the partial order and obtain several criteria for
the existence, uniqueness and order-attractiveness of the fixed points, both
in the presence and in the absence of a coupled lower-upper fixed point.
As an application, we present a fixed point result for a class of mixed monotone
operators in the setting of ordered linear spaces.

\noindent
{\bf Key Words and Phrases}:
Partially ordered set, mixed monotone operator, monotone iterative method, fixed point, coupled fixed point,
coupled lower-upper fixed point, order-attractive point, ordered linear space, cone.

\noindent {\bf 2010 Mathematics Subject Classification}: 47H10, 06A06, 06F20.}

\bigskip

\section{Introduction and preliminaries}

A fundamental principle both in mathematics and computer science is iteration.
Particularly, fixed point iteration and monotone iterative techniques are the
core methods when solving a large class of abstract and applied mathematical
problems and play an important part in many algorithms.

Monotone iterative methods (in connection with the method of lower and upper
solutions) go back at least to E. Picard
\cite{Picard1890,Picard1893,Picard1900} in the 1890s, in the study of the
Dirichlet problem for nonlinear second order (ordinary and partial)
differential equations. Since then, these methods have been further developed
in more abstract settings and have been used to solve a wide variety of
nonlinear problems arising from various fields of science. In this direction,
the class of operators to which these methods were applied has been enlarged
to include operators with more general monotonicity-type properties, like the
mixed-monotone property.

In this context, most of the abstract fixed point results for the class of
mixed monotone operators that make use of monotone iterative techniques were
formulated in the framework of ordered topological spaces (particularly,
ordered Banach spaces) (e.g.,
\cite{Opoitsev1975,Opoitsev1978a,Moore1981,Guo1987,Guo1988,Guo2004}),
partially ordered metric spaces (e.g.,
\cite{GnanaBhaskar2006,Lakshmikantham2009,Samet2010,Harjani2011a,Luong2011,Rus2011}%
) and partially ordered cone metric spaces (e.g.,
\cite{Sabetghadam2009,Karapinar2010,Shatanawi2010,Shatanawi2010a,Olatinwo2011}%
). This seems perfectly justified by the need of some convergence structure
that is compatible in some way with the partial order, such that one can
consistently describe the result of the iterative process.

Following this long line of research, both pure and applied, the aim of this
paper is to show that it is still possible to obtain constructive fixed point
results by monotone iteration without assuming any convergence structure, in
the setting of partially ordered sets and with no additional assumptions on
the partial order. In particular, we are interested in obtaining criteria for
the existence, uniqueness and attractiveness (in some predefined sense) of the
fixed points, exclusively by means of explicit iterative techniques, both in
the presence and in the absence of a coupled lower-upper fixed point. Also, we
choose to study the class of mixed monotone operators since it contains both
the classes of nondecreasing and nonincreasing operators, respectively, in one
unified approach, while being large enough to describe a great number of
nonlinear problems where usual monotonicity is not present.

Recall that if $(X,\leq)$ is a partially ordered set and $A:X\times
X\rightarrow X$, then $A$ is said to be \emph{mixed monotone} (or is said to
have \emph{the mixed monotone property)} if $A$ is nondecreasing in the first
argument and nonincreasing in the second argument, i.e.,%
\[
x_{1},x_{2},y_{1},y_{2}\in X,\quad x_{1}\leq x_{2},~y_{1}\geq y_{2}\Rightarrow
A(x_{1},y_{1})\leq A(x_{2},y_{2}).
\]
A pair $(x,y)\in X\times X$ is called \emph{a coupled fixed point of} $A$ if
\[
A(x,y)=x,\text{\quad}A(y,x)=y,
\]
and it is called \emph{a coupled lower-upper fixed point of} $A$ if
\[
x\leq y,\quad x\leq A(x,y),\quad y\geq A(y,x).
\]
Also, $x\in X$ is called \emph{a fixed point} of $A$ if $A(x,x)=x$, i.e.,
$(x,x)$ is a coupled fixed point of $A$. For more details, we refer to
\cite{Guo1987,GnanaBhaskar2006}.

\begin{remark}
While the term \textquotedblleft\emph{mixed monotone}\textquotedblright\ is
due to Lakshmikantam and Guo \cite{Guo1987}, the concept of mixed monotone
operator and the corresponding iterative method go back at least to
Kurpel$^{\prime}$ \cite{Kurpel1968} in the study of two-sided operator
inequalities and their applications to approximating the solutions of
integral, differential, integro-differential and finite (algebraic and
transcendental) equations. We point in this direction to the monograph of
Kurpel$^{\prime}$ and {\v{S}}uvar \cite{Kurpel1980}. Later on, Opo{\u{\i}}tsev
\cite{Opoitsev1975,Opoitsev1978a} established the first (to the best of our
knowledge) fixed point and coupled fixed point results for this type of
operators, in the framework of ordered Banach spaces. In the past three
decades, the results of Opo{\u{\i}}tsev have been rediscovered\ in various
forms and have been extended by many authors (we refer to \cite{Guo2004} for
an overview of the results published on this topic since the 1980s).
Regrettably, none of them seems to have been aware of the results of
Opo{\u{\i}}tsev, although English translations of his works have been
available right after their initial publication in Russian.
\end{remark}

In what follows, we will make use of the following notions and notations.

Let $(X,\leq)$ be a partially ordered set. If $x,y\in X$ are such that $x\leq
y$, then $[x,y]$ denotes the set of all elements $z\in X$ such that $x\leq
z\leq y$. Also, if $(u_{n})$ is a sequence in $X$, then $\sup u_{n}$ and $\inf
u_{n}$ denote the supremum, i.e., the least upper bound, and the infimum,
i.e., the greatest lower bound (when they exist), respectively, of the set
$\{u_{n}:n\in\mathbb{N\}}$, where $\mathbb{N}$ represents the set of all
nonnegative integers. We also write $\sup_{n\geq k}u_{n}$ and $\inf_{n\geq
k}u_{n}$ (for any $k\in\mathbb{N}$) to denote the supremum and the infimum,
respectively, of the set $\{u_{n}:n\geq k\mathbb{\}}$ .

In order to properly define the iterates of any bivariate operator, we need a
composition rule that applies to this class of mappings, hence for any
operators $A,B:X\times X\rightarrow X$ define (cf. \cite{Rus2011}) \emph{the
symmetric composition} (or, \emph{the }$s$\emph{-composition} for short) of
$A$ and $B$ by%
\[
B\ast A:X\times X\rightarrow X,\quad(B\ast A)(x,y)=B(A(x,y),A(y,x))\quad
(x,y\in X).
\]
The $s$-composition is associative and the canonical projection
\[
P_{X}:X\times X\rightarrow X,\quad P(x,y)=x\quad(x,y\in X)
\]
is the identity element, hence one can define the functional powers (i.e., the
iterates) of any operator $A:X\times X\rightarrow X$ with respect to the
$s$-composition by%
\[
A^{n+1}=A\ast A^{n}=A^{n}\ast A\quad(n=0,1,...),\quad A^{0}=P_{X}\text{.}%
\]
When $(X,\leq)$ is a partially ordered set, the $s$-composition of mixed
monotone operators has also the mixed monotone property, hence the iterates of
a mixed monotone operator are also mixed monotone. For more details on this
topic, we refer to \cite{Rus2011}.

\section{Main results}

From this point forward in this Section, it will be assumed that $(X,\leq)$ is
a partially ordered set, $A:X\times X\rightarrow X$ is a mixed monotone
operator and $x_{0},y_{0}\in X$ are such that $x_{0}\leq y_{0}$. Also, define
the sequences $\left(  x_{n}\right)  $ and $\left(  y_{n}\right)  $
recursively by%
\begin{equation}
x_{n+1}=A\left(  x_{n},y_{n}\right)  ,\quad y_{n+1}=A\left(  y_{n}%
,x_{n}\right)  \qquad(n\in\mathbb{N}), \label{eq:0711_00}%
\end{equation}
or, equivalently, by%
\[
x_{n}=A^{n}(x_{0},y_{0}),\quad y_{n}=A^{n}(y_{0},x_{0})\qquad(n\in
\mathbb{N)}.
\]
This coupled iteration together with the results contained in the following
lemma represent the core of the method of monotone iterations for mixed
monotone operators. These ideas are not new and can be found spread throughout
the entire literature that studies the (coupled) fixed points for mixed
monotone operators, though they are usually considered in a less general
setting and are sometimes \emph{hidden} inside proofs. Note that the
assumption of $(x_{0},y_{0})$ being a coupled lower-upper fixed point of $A$
is not essential for obtaining most of the (coupled) fixed point results in
this paper, hence it will be considered as a separate assumption, which
represents a new approach.

\begin{lemma}
\label{th:0711_01}The following properties take place:

\begin{enumerate}
\item For all $n\in\mathbb{N}$, $x_{n}\leq y_{n}$ and
\begin{equation}
x,y\in\lbrack x_{n},y_{n}]\Rightarrow A\left(  x,y\right)  \in\left[
x_{n+1},y_{n+1}\right]  . \label{eq:0711_01}%
\end{equation}

\item If $(x,y)\in\lbrack x_{0},y_{0}]\times\lbrack x_{0},y_{0}]$ is a coupled
fixed point of $A$, then
\[
x,y\in\bigcap\limits_{n\geq0}\left[  x_{n},y_{n}\right]  .
\]

\item If $(x_{0},y_{0})$ is a coupled lower-upper fixed point of $A$, then
$(x_{n})$ is nondecreasing, $(y_{n})$ is nonincreasing and $(x_{n},y_{n})$ is
a coupled lower-upper fixed point of $A$, for all $n\in\mathbb{N}$.
\end{enumerate}
\end{lemma}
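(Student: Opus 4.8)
The plan is to establish all three parts by induction on $n$, using the mixed monotone property as the only tool; the engine for everything is the trapping implication \eqref{eq:0711_01} in part (1), which I would prove first.

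For part (1), I would prove the implication \eqref{eq:0711_01} and the inequality $x_n \le y_n$ simultaneously by induction. The base case $x_0 \le y_0$ is the standing hypothesis. Assuming $x_n \le y_n$, take any $x,y \in [x_n,y_n]$, so that $x_n \le x \le y_n$ and $x_n \le y \le y_n$. To get the lower bound $x_{n+1} \le A(x,y)$, I would apply mixed monotonicity using $x_n \le x$ (nondecreasing first argument) and $y_n \ge y$ (nonincreasing second argument), which yields $x_{n+1} = A(x_n,y_n) \le A(x,y)$. For the upper bound, the relations $x \le y_n$ and $y \ge x_n$ give $A(x,y) \le A(y_n,x_n) = y_{n+1}$. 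This proves \eqref{eq:0711_01}. Specializing to $x=x_n$, $y=y_n$ (which lie in $[x_n,y_n]$ precisely because $x_n \le y_n$) and using $A(x_n,y_n)=x_{n+1}$, $A(y_n,x_n)=y_{n+1}$ then yields $x_{n+1} \le y_{n+1}$, closing the induction.

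Part (2) would follow by a short induction showing $x,y \in [x_n,y_n]$ for every $n$. The base case $n=0$ is the hypothesis $(x,y) \in [x_0,y_0] \times [x_0,y_0]$. For the inductive step, assuming $x,y \in [x_n,y_n]$, I would apply \eqref{eq:0711_01} to the pair $(x,y)$ and also to the pair $(y,x)$ — both pairs have their two entries in $[x_n,y_n]$ — to conclude $A(x,y), A(y,x) \in [x_{n+1},y_{n+1}]$. Since $(x,y)$ is a coupled fixed point, $A(x,y)=x$ and $A(y,x)=y$, so $x,y \in [x_{n+1},y_{n+1}]$, completing the step and hence the intersection statement.

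For part (3), I would run a separate induction establishing $x_n \le x_{n+1}$ and $y_{n+1} \le y_n$ for all $n$; combined with $x_n \le y_n$ from part (1), these three relations are exactly the assertion that $(x_n,y_n)$ is a coupled lower-upper fixed point, and they also express that $(x_n)$ is nondecreasing and $(y_n)$ is nonincreasing. The base case is the coupled lower-upper fixed point hypothesis on $(x_0,y_0)$, namely $x_0 \le A(x_0,y_0)=x_1$ and $y_1=A(y_0,x_0) \le y_0$. In the inductive step, from $x_n \le x_{n+1}$ and $y_{n+1} \le y_n$, mixed monotonicity gives $x_{n+1}=A(x_n,y_n) \le A(x_{n+1},y_{n+1})=x_{n+2}$ and, after interchanging the roles of the two arguments, $y_{n+2}=A(y_{n+1},x_{n+1}) \le A(y_n,x_n)=y_{n+1}$. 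Throughout, the computations are routine; the one point demanding care — and where sign errors are easiest — is tracking the reversed inequality in the second (nonincreasing) argument at each application of the mixed monotone property, especially in this last induction, where the roles of $x$ and $y$ are swapped in the $(y_n)$ recursion.
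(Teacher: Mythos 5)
Your proof is correct and follows essentially the same route as the paper's: a simultaneous induction for part (1) via the chain $x_{n+1}=A(x_n,y_n)\leq A(x,y)\leq A(y_n,x_n)=y_{n+1}$, iterated application of \eqref{eq:0711_01} for part (2), and a separate induction on $x_n\leq x_{n+1}$, $y_{n+1}\leq y_n$ for part (3). All applications of the mixed monotone property, including the swapped arguments in the $(y_n)$ recursion, are handled correctly.
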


\begin{proof}
~

\begin{enumerate}
\item The proof is by induction on $n$. Assume that $x_{n}\leq y_{n}$ for some
$n\in\mathbb{N}$ and consider arbitrary $x,y\in\lbrack x_{n},y_{n}]$. By the
mixed monotonicity of $A$,
\[
x_{n+1}=A(x_{n},y_{n})\leq A(x,y)\leq A\left(  y_{n},x_{n}\right)  =y_{n+1},
\]
hence $x_{n+1}\leq y_{n+1}$ and $A(x,y)\in\lbrack x_{n+1},y_{n+1}]$, which
proves (\ref{eq:0711_01}). Since our assumption is true for $n=0$, the proof
of \textbf{1} is complete.

\item Since $A(x,y)=x$, $A(y,x)=y$ and $x,y\in\lbrack x_{0},y_{0}]$, it
follows that $x,y\in\lbrack x_{n},y_{n}]$ for all $n\in\mathbb{N}$ as a direct
consequence of (\ref{eq:0711_01}), by induction on $n$.

\item Assume that $x_{n}\leq x_{n+1}$ and $y_{n}\geq y_{n+1}$ for some
$n\in\mathbb{N}$. Note that this is equivalent to $(x_{n},y_{n})$ being a
coupled lower-upper fixed point of $A$, since $x_{n}\leq y_{n}$ by \textbf{1}.
Then,%
\begin{align*}
x_{n+1}  &  =A(x_{n},y_{n})\leq A(x_{n+1},y_{n+1})=x_{n+2}\\
y_{n+1}  &  =A(y_{n},x_{n})\geq A(y_{n+1},x_{n+1})=y_{n+2}.
\end{align*}
Since our assumption is true for $n=0$, it follows by induction that
$x_{n}\leq x_{n+1}$ and $y_{n}\geq y_{n+1}$ (hence $(x_{n},y_{n})$ is a
coupled lower-upper fixed point of $A$) for all $n\in\mathbb{N}$.
\end{enumerate}
\end{proof}

The following result is a direct consequence of Lemma \ref{th:0711_01} and
provides a negative answer on the existence of (coupled) fixed points.

\begin{corollary}
\label{th:0711_02}If $\bigcap\limits_{n\geq0}\left[  x_{n},y_{n}\right]
=\emptyset$, then $A$ has no coupled fixed points in $\left[  x_{0}%
,y_{0}\right]  \times\lbrack x_{0},y_{0}]$ (hence, no fixed points in
$[x_{0},y_{0}] $).
\end{corollary}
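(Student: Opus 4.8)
The plan is to argue by contraposition, reducing everything to part~2 of Lemma~\ref{th:0711_01}. Since the conclusion of the corollary asserts the \emph{absence} of coupled fixed points, the natural move is to assume the opposite of the conclusion and then contradict the hypothesis $\bigcap_{n\geq0}[x_n,y_n]=\emptyset$.

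First I would suppose that $A$ \emph{does} have a coupled fixed point $(x,y)\in[x_0,y_0]\times[x_0,y_0]$. By part~2 of Lemma~\ref{th:0711_01}, this immediately forces $x,y\in\bigcap_{n\geq0}[x_n,y_n]$. In particular the intersection contains $x$, hence is nonempty, which is exactly the negation of the standing hypothesis. This contradiction shows that no coupled fixed point of $A$ can lie in $[x_0,y_0]\times[x_0,y_0]$.

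For the parenthetical claim about fixed points, I would recall that $x\in[x_0,y_0]$ is a fixed point of $A$ precisely when $(x,x)$ is a coupled fixed point of $A$, and that $(x,x)\in[x_0,y_0]\times[x_0,y_0]$ holds trivially. Thus a fixed point in $[x_0,y_0]$ would produce a coupled fixed point in $[x_0,y_0]\times[x_0,y_0]$, which has already been excluded; so $A$ has no fixed points in $[x_0,y_0]$ either.

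I do not expect any real obstacle here: the entire substance has been front-loaded into part~2 of the lemma (a short induction resting on property~(\ref{eq:0711_01})), and the corollary is just its logical contrapositive, together with the remark that fixed points are the diagonal special case of coupled fixed points. The only point requiring care is to phrase the reduction cleanly, invoking the lemma rather than re-deriving the invariance of the nested order intervals $[x_n,y_n]$.
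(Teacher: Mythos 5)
Your proof is correct and is precisely the argument the paper intends: the corollary is stated there as a direct consequence of Lemma~\ref{th:0711_01}, and your contrapositive via part~2 of that lemma, together with the observation that a fixed point $x\in[x_0,y_0]$ yields the diagonal coupled fixed point $(x,x)\in[x_0,y_0]\times[x_0,y_0]$, fills in exactly the routine steps the paper leaves implicit.
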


\subsection{Order-attractive points for mixed monotone operators}

Before we formulate and prove the main fixed point theorems, we need to
introduce and study some new notions.

\begin{definition}
\label{def:0711_01}A point $x^{\ast}\in X$ is said to be $(x_{0},y_{0}%
)$-\emph{weakly order-attractive for }$A$ if $\bigcap\limits_{n\geq0}\left[
x_{n},y_{n}\right]  =\{x^{\ast}\}$, and we denote this by $(x_{0}%
,y_{0})\overset{A}{\rightarrow}x^{\ast}$. Alternatively, we may say that
$x^{\ast}$ \emph{weakly order-attracts} $(x_{0},y_{0})$ \emph{through }$A$, or
that $(x_{0},y_{0})$ \emph{is weakly order-attracted by }$x^{\ast}$
\emph{through }$A$.
\end{definition}

\begin{definition}
\label{def:0711_02}A point $x^{\ast}\in X$ is said to be $(x_{0},y_{0}%
)$-\emph{order-attractive for }$A$ if $\sup x_{n}=\inf y_{n}=x^{\ast}$, and we
denote this by $(x_{0},y_{0})\overset{A}{\rightrightarrows}x^{\ast}$.
Alternatively, we may say that $x^{\ast}$ \emph{order-attracts} $(x_{0}%
,y_{0})$ \emph{through }$A$, or that $(x_{0},y_{0})$ \emph{is order-attracted
by }$x^{\ast}$ \emph{through }$A$.
\end{definition}

\begin{definition}
\label{def:0711_03}A point $x^{\ast}\in X$ is said to be \emph{weakly
order-attractive for }$A$\emph{\ on }$[x_{0},y_{0}]$ if $x^{\ast}\in\lbrack
x_{0},y_{0}]$ and $(u_{0},v_{0})\overset{A}{\rightarrow}x^{\ast}$ for all
$u_{0},v_{0}\in\lbrack x_{0},y_{0}]$ with $u_{0}\leq x^{\ast}\leq v_{0}$, and
we denote this by $[x_{0},y_{0}]\overset{A}{\rightarrow}x^{\ast}$.
Alternatively, we may say that $x^{\ast}$\emph{\ weakly order-attracts}
$[x_{0},y_{0}]$ \emph{through }$A$, or that $[x_{0},y_{0}]$ \emph{is weakly
order-attracted by }$x^{\ast}$ \emph{through }$A$.
\end{definition}

\begin{definition}
\label{def:0711_04}A point $x^{\ast}\in X$ is said to be
\emph{order-attractive for }$A$\emph{\ on }$[x_{0},y_{0}]$ if $x^{\ast}%
\in\lbrack x_{0},y_{0}]$ and $(u_{0},v_{0})\overset{A}{\rightrightarrows
}x^{\ast}$ for all $u_{0},v_{0}\in\lbrack x_{0},y_{0}]$ with $u_{0}\leq
x^{\ast}\leq v_{0}$, and we denote this by $[x_{0},y_{0}%
]\overset{A}{\rightrightarrows}x^{\ast}$. Alternatively, we may say that
$x^{\ast}$ \emph{order-attracts} $[x_{0},y_{0}]$ \emph{through }$A$, or that
$[x_{0},y_{0}]$ \emph{is order-attracted by }$x^{\ast}$ \emph{through }$A$.
\end{definition}

\begin{proposition}
\label{th:0711_03}Let $x^{\ast}\in X$. The following properties take place:

\begin{enumerate}
\item If $(x_{0},y_{0})\overset{A}{\rightarrow}x^{\ast}$, then $x^{\ast}%
\in\lbrack x_{0},y_{0}]$.

\item If $[x_{0},y_{0}]\overset{A}{\rightarrow}x^{\ast}$, then $(x_{0}%
,y_{0})\overset{A}{\rightarrow}x^{\ast}$ and $[u_{0},v_{0}%
]\overset{A}{\rightarrow}x^{\ast}$ for all $u_{0},v_{0}\in\lbrack x_{0}%
,y_{0}]$ with $u_{0}\leq x^{\ast}\leq v_{0}$.

\item If $[x_{0},y_{0}]\overset{A}{\rightrightarrows}x^{\ast}$, then
$(x_{0},y_{0})\overset{A}{\rightrightarrows}x^{\ast}$ and $[u_{0}%
,v_{0}]\overset{A}{\rightrightarrows}x^{\ast}$ for all $u_{0},v_{0}\in\lbrack
x_{0},y_{0}]$ with $u_{0}\leq x^{\ast}\leq v_{0}$.

\item $(x_{0},y_{0})\overset{A}{\rightrightarrows}x^{\ast}$ if and only
if\emph{\ }$(x_{0},y_{0})\overset{A}{\rightarrow}x^{\ast}$ and $\sup
x_{n},\inf y_{n}$ exist.

\item If $[x_{0},y_{0}]\overset{A}{\rightrightarrows}x^{\ast}$, then
$[x_{0},y_{0}]\overset{A}{\rightarrow}x^{\ast}$.
\end{enumerate}
\end{proposition}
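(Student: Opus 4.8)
The plan is to settle the five assertions by unwinding the definitions, isolating the one genuinely substantive argument in part 4. Part 1 is immediate: the hypothesis $(x_{0},y_{0})\overset{A}{\rightarrow}x^{\ast}$ means $\bigcap_{n\geq0}[x_{n},y_{n}]=\{x^{\ast}\}$, so reading off the $n=0$ term gives $x^{\ast}\in[x_{0},y_{0}]$.

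Parts 2 and 3 rest on the same elementary interval inclusion and can be handled together, changing only the symbol $\overset{A}{\rightarrow}$ to $\overset{A}{\rightrightarrows}$. Given $u_{0},v_{0}\in[x_{0},y_{0}]$ with $u_{0}\leq x^{\ast}\leq v_{0}$, I would first note $x^{\ast}\in[u_{0},v_{0}]$. Then for any $u_{0}^{\prime},v_{0}^{\prime}\in[u_{0},v_{0}]$ with $u_{0}^{\prime}\leq x^{\ast}\leq v_{0}^{\prime}$, the chains $x_{0}\leq u_{0}\leq u_{0}^{\prime}$ and $v_{0}^{\prime}\leq v_{0}\leq y_{0}$ place $u_{0}^{\prime},v_{0}^{\prime}\in[x_{0},y_{0}]$, so the hypothesis applies and yields the desired attraction of $(u_{0}^{\prime},v_{0}^{\prime})$. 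Specializing to $(u_{0},v_{0})=(x_{0},y_{0})$ gives the first conclusion, while the general case gives $[u_{0},v_{0}]\overset{A}{\rightarrow}x^{\ast}$ (resp.\ $\overset{A}{\rightrightarrows}x^{\ast}$). The only thing being used here is the transitivity of $\leq$ to keep all points inside $[x_{0},y_{0}]$.

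Part 4 is the crux and the step I expect to require the most care. For the forward implication I would use $\sup x_{n}=\inf y_{n}=x^{\ast}$ to obtain $x_{n}\leq x^{\ast}\leq y_{n}$ for every $n$, putting $x^{\ast}$ in the intersection; then any $z\in\bigcap_{n\geq0}[x_{n},y_{n}]$ is an upper bound of $(x_{n})$ and a lower bound of $(y_{n})$, so $x^{\ast}=\sup x_{n}\leq z\leq\inf y_{n}=x^{\ast}$, forcing $z=x^{\ast}$, and the sup and inf exist by assumption. The delicate direction is the converse: assuming only $\bigcap_{n\geq0}[x_{n},y_{n}]=\{x^{\ast}\}$ and the bare existence of $s:=\sup x_{n}$ and $t:=\inf y_{n}$, I must prove $s=t=x^{\ast}$. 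Here $x^{\ast}\in[x_{n},y_{n}]$ first gives the bracketing $s\leq x^{\ast}\leq t$; the decisive observation is then that $s$ itself lies in every interval, since $x_{n}\leq s$ and $s\leq x^{\ast}\leq y_{n}$, whence $s\in\bigcap_{n\geq0}[x_{n},y_{n}]=\{x^{\ast}\}$ and $s=x^{\ast}$, and symmetrically $t=x^{\ast}$. This collapse of the inequality $s\leq x^{\ast}\leq t$ to equality, driven by membership of $\sup x_{n}$ in each $[x_{n},y_{n}]$, is the point on which the whole equivalence turns.

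Finally, part 5 follows immediately from part 4: for each admissible pair $(u_{0},v_{0})$ the hypothesis $[x_{0},y_{0}]\overset{A}{\rightrightarrows}x^{\ast}$ gives $(u_{0},v_{0})\overset{A}{\rightrightarrows}x^{\ast}$, and the ``only if'' half of part 4 (order-attractiveness entails weak order-attractiveness) yields $(u_{0},v_{0})\overset{A}{\rightarrow}x^{\ast}$. Combined with $x^{\ast}\in[x_{0},y_{0}]$, this is exactly the statement $[x_{0},y_{0}]\overset{A}{\rightarrow}x^{\ast}$.
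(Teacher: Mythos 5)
Your proof is correct and follows essentially the same route as the paper: parts 1, 2, 3, and 5 are handled by unwinding the definitions (the paper dismisses them as direct consequences, while you spell out the interval inclusions), and your treatment of part 4 — bracketing $\sup x_{n}\leq x^{\ast}\leq\inf y_{n}$ and then observing that $\sup x_{n}$ and $\inf y_{n}$ themselves belong to every $[x_{n},y_{n}]$, hence to the singleton intersection — is exactly the paper's argument.
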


\begin{proof}
\textbf{1}, \textbf{2} and \textbf{3} are direct consequences of the
definitions. Also, \textbf{5} follows from \textbf{4} and the definitions,
hence we only need to prove \textbf{4}.

If $(x_{0},y_{0})\overset{A}{\rightrightarrows}x^{\ast}$, then $\sup
x_{n},\inf y_{n}~$exist and $\sup x_{n}=\inf y_{n}=x^{\ast}$, hence $x_{n}\leq
x^{\ast}\leq y_{n}$ for all $n\in\mathbb{N}$. Now, let $x\in X$ such that
$x_{n}\leq x\leq y_{n}$ for all $n\in\mathbb{N}$. Then%
\[
x^{\ast}=\sup x_{n}\leq x\leq\inf y_{n}=x^{\ast},
\]
hence $x^{\ast}=x$. Concluding, $\bigcap\limits_{n\geq0}\left[  x_{n}%
,y_{n}\right]  =\{x^{\ast}\}$, i.e., $(x_{0},y_{0})\overset{A}{\rightarrow
}x^{\ast}$.

Conversely, if $(x_{0},y_{0})\overset{A}{\rightarrow}x^{\ast}$, then
$x_{n}\leq x^{\ast}\leq y_{n}$ for all $n\in\mathbb{N}$ and since $\sup
x_{n},\inf y_{n}$ exist, it follows that $x_{n}\leq\sup x_{n}\leq x^{\ast}%
\leq\inf y_{n}\leq y_{n}$ for all $n\in\mathbb{N}$, hence%
\[
\sup x_{n},\inf y_{n}\in\bigcap\limits_{n\geq0}\left[  x_{n},y_{n}\right]
=\{x^{\ast}\},
\]
which proves that $(x_{0},y_{0})\overset{A}{\rightrightarrows}x^{\ast}$.
\end{proof}

In the following result we establish the properties of (weakly)
ordered-attractive fixed points.

\begin{theorem}
\label{th:0711_06}Let $x^{\ast}\in X$. The following equivalences take place:

\begin{enumerate}
\item $[x_{0},y_{0}]\overset{A}{\rightarrow}x^{\ast}$ if and only if
$(x_{0},y_{0})\overset{A}{\rightarrow}x^{\ast}$ and $x^{\ast}$ is a fixed
point of $A$.

\item $[x_{0},y_{0}]\overset{A}{\rightrightarrows}x^{\ast}$ if and only if
$(x_{0},y_{0})\overset{A}{\rightrightarrows}x^{\ast}$ and $x^{\ast}$ is a
fixed point of $A$.
\end{enumerate}

Moreover, in any of the above situations, $(x^{\ast},x^{\ast})$ is the unique
coupled fixed point of $A$ in $[x_{0},y_{0}]\times\lbrack x_{0},y_{0}] $
(hence, $x^{\ast}$ is the unique fixed point of $A$ in $[x_{0},y_{0}]$).
\end{theorem}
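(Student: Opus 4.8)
The plan is to prove each equivalence by establishing both implications, pushing most of the work into the weak case (part 1), then bootstrapping to the order-attractive case (part 2), and finally reading off the uniqueness clause from Lemma \ref{th:0711_01}.

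For the forward implication of part 1, I would split the conclusion into its two halves. That $(x_{0},y_{0})\overset{A}{\rightarrow}x^{\ast}$ is immediate from Proposition \ref{th:0711_03}.2. To show $x^{\ast}$ is a fixed point, I would instantiate the definition of $[x_{0},y_{0}]\overset{A}{\rightarrow}x^{\ast}$ at the diagonal pair $u_{0}=v_{0}=x^{\ast}$, which is admissible since $x^{\ast}\in[x_{0},y_{0}]$ and $x^{\ast}\le x^{\ast}\le x^{\ast}$. Running the iteration from $(x^{\ast},x^{\ast})$ collapses the first interval to a single point, since $u_{1}=A(x^{\ast},x^{\ast})=v_{1}$ gives $[u_{1},v_{1}]=\{A(x^{\ast},x^{\ast})\}$; as the full intersection equals $\{x^{\ast}\}$ and is contained in $[u_{1},v_{1}]$, one concludes $x^{\ast}=A(x^{\ast},x^{\ast})$. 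This diagonal-collapse step is the only genuinely delicate point of the whole argument.

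For the converse of part 1, Proposition \ref{th:0711_03}.1 supplies $x^{\ast}\in[x_{0},y_{0}]$. Fixing arbitrary $u_{0},v_{0}\in[x_{0},y_{0}]$ with $u_{0}\le x^{\ast}\le v_{0}$ and letting $(u_{n}),(v_{n})$ be the associated iterates, I would combine two observations: first, since $(x^{\ast},x^{\ast})$ is a coupled fixed point lying in $[u_{0},v_{0}]$, Lemma \ref{th:0711_01}.2 places $x^{\ast}\in\bigcap_{n\ge0}[u_{n},v_{n}]$; second, an induction using (\ref{eq:0711_01}) shows $u_{n},v_{n}\in[x_{n},y_{n}]$, so $[u_{n},v_{n}]\subseteq[x_{n},y_{n}]$ and hence $\bigcap_{n}[u_{n},v_{n}]\subseteq\bigcap_{n}[x_{n},y_{n}]=\{x^{\ast}\}$. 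Together these give $\bigcap_{n}[u_{n},v_{n}]=\{x^{\ast}\}$, i.e. $(u_{0},v_{0})\overset{A}{\rightarrow}x^{\ast}$, completing part 1.

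Part 2 reduces to part 1. The forward implication follows by combining Proposition \ref{th:0711_03}.3 (to extract $(x_{0},y_{0})\overset{A}{\rightrightarrows}x^{\ast}$) with Proposition \ref{th:0711_03}.5 and the already-proven part 1 (to obtain that $x^{\ast}$ is fixed). For the converse, part 1 already yields $(u_{0},v_{0})\overset{A}{\rightarrow}x^{\ast}$ for every admissible pair, and the nesting $x_{n}\le u_{n}\le x^{\ast}\le v_{n}\le y_{n}$ established above upgrades this to the order-attractive conclusion: $x^{\ast}$ is an upper bound of $\{u_{n}\}$, and any other upper bound also bounds $\{x_{n}\}$ above and hence dominates $\sup x_{n}=x^{\ast}$, so $\sup u_{n}=x^{\ast}$; dually $\inf v_{n}=x^{\ast}$, giving $(u_{0},v_{0})\overset{A}{\rightrightarrows}x^{\ast}$. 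Finally, in every case $x^{\ast}$ is a fixed point with $(x_{0},y_{0})\overset{A}{\rightarrow}x^{\ast}$, so the uniqueness clause is immediate: any coupled fixed point $(p,q)\in[x_{0},y_{0}]\times[x_{0},y_{0}]$ satisfies $p,q\in\bigcap_{n}[x_{n},y_{n}]=\{x^{\ast}\}$ by Lemma \ref{th:0711_01}.2, forcing $(p,q)=(x^{\ast},x^{\ast})$.
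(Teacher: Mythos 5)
Your proposal is correct and follows essentially the same route as the paper: the diagonal instantiation $u_{0}=v_{0}=x^{\ast}$ for the forward implication, the sandwich $x_{n}\leq u_{n}\leq x^{\ast}\leq v_{n}\leq y_{n}$ for the converse, the squeeze on suprema/infima for part 2, and Lemma \ref{th:0711_01}(2) for uniqueness. The only cosmetic difference is that the paper obtains the sandwich in one induction from the mixed monotonicity of the iterates $A^{n}$ (with $x^{\ast}$ a fixed point of each $A^{n}$), whereas you derive it in two pieces from Lemma \ref{th:0711_01}(2) and an induction via (\ref{eq:0711_01}); both are equivalent formulations of the same monotone-iteration argument.
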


\begin{proof}
First, we prove the direct implications.

Assume that $[x_{0},y_{0}]\overset{A}{\rightarrow}x^{\ast}$. Then $x^{\ast}%
\in\lbrack x_{0},y_{0}]$, hence $(x^{\ast},x^{\ast})\overset{A}{\rightarrow
}x^{\ast}$, which ensures that $x^{\ast}$ is a fixed point of $A$. Also,
$(x_{0},y_{0})\overset{A}{\rightarrow}x^{\ast}$ by Proposition
\ref{th:0711_03} and the direct implication in \textbf{1} is proved.

Similarly, if $[x_{0},y_{0}]\overset{A}{\rightrightarrows}x^{\ast}$, then
$(x_{0},y_{0})\overset{A}{\rightrightarrows}x^{\ast}$ and $[x_{0}%
,y_{0}]\overset{A}{\rightarrow}x^{\ast}$ by Proposition \ref{th:0711_03},
hence $x^{\ast}$ is a fixed point of $A$ using the direct implication in
\textbf{1}, and the direct implication in \textbf{2} is also proved.

Now we prove the converse implications.

Assume that $x^{\ast}$ is a fixed point of $A$ and $(x_{0},y_{0}%
)\overset{A}{\rightarrow}x^{\ast}$. We prove that $[x_{0},y_{0}%
]\overset{A}{\rightarrow}x^{\ast}$.

Let $u_{0},v_{0}\in\lbrack x_{0},y_{0}]$ such that $u_{0}\leq x^{\ast}\leq
v_{0}$ and define the sequences $(u_{n}),(v_{n})$ by
\[
u_{n+1}=A(u_{n},v_{n}),~v_{n+1}=A(v_{n},u_{n})\quad(n\in\mathbb{N})
\]
or, equivalently, by%
\[
u_{n}=A^{n}(u_{0},v_{0}),~v_{n}=A^{n}(v_{0},u_{0})\quad(n\in\mathbb{N}).
\]
Since $x^{\ast}$ is a fixed point of $A$, it follows that $x^{\ast}$ is a
fixed point of $A^{n}$ for all $n\in\mathbb{N}$. Also, $A^{n}$ is mixed
monotone for all $n\in\mathbb{N}$, and since
\[
x_{0}\leq u_{0}\leq x^{\ast}\leq v_{0}\leq y_{0},
\]
it follows that
\begin{equation}
x_{n}\leq u_{n}\leq x^{\ast}\leq v_{n}\leq y_{n}\quad\text{for all }%
n\in\mathbb{N}, \label{eq:0711_02}%
\end{equation}
which ensures that%
\[
\{x^{\ast}\}\subseteq\bigcap\limits_{n\geq0}\left[  u_{n},v_{n}\right]
\subseteq\bigcap\limits_{n\geq0}\left[  x_{n},y_{n}\right]  =\{x^{\ast}\},
\]
hence $(u_{0},v_{0})\overset{A}{\rightarrow}x^{\ast}$ and the converse
implication in \textbf{1} is proved.

Now, assume that $x^{\ast}$ is a fixed point of $A$ and $(x_{0},y_{0}%
)\overset{A}{\rightrightarrows}x^{\ast}$. We prove that $[x_{0},y_{0}%
]\overset{A}{\rightrightarrows}x^{\ast}$.

Let $u_{0},v_{0}\in\lbrack x_{0},y_{0}]$ such that $u_{0}\leq x^{\ast}\leq
v_{0}$ and let $(u_{n}),(v_{n})$ as previously defined. By using the same
argument as before, we obtain (\ref{eq:0711_02}), and since $\sup x_{n}=\inf
y_{n}=x^{\ast}$, it follows that $\sup u_{n}=\inf v_{n}=x^{\ast}$, i.e.,
$(u_{0},v_{0})\overset{A}{\rightrightarrows}x^{\ast}$, hence the converse
implication in \textbf{2} is proved.

Finally, we only need to prove that if $x^{\ast}$ is a fixed point of $A$ and
$(x_{0},y_{0})\overset{A}{\rightarrow}x^{\ast}$ (or $(x_{0},y_{0}%
)\overset{A}{\rightrightarrows}x^{\ast}$), then $(x^{\ast},x^{\ast})$ is the
unique coupled fixed point of $A$ in $[x_{0},y_{0}]\times\lbrack x_{0},y_{0}%
]$. Indeed, if $(x,y)\in$ $[x_{0},y_{0}]\times\lbrack x_{0},y_{0}]$ is a
coupled fixed point of $A$, then, by Lemma \ref{th:0711_01}, Definition
\ref{def:0711_01} (and Proposition \ref{th:0711_03}(4)), we have that%
\[
x,y\in\bigcap\limits_{n\geq0}\left[  x_{n},y_{n}\right]  =\{x^{\ast}\},
\]
hence $x=y=x^{\ast}$. Clearly, $(x^{\ast},x^{\ast})$ is a coupled fixed point
of $A$ in $[x_{0},y_{0}]\times\lbrack x_{0},y_{0}]$, and the proof is now complete.
\end{proof}

\begin{remark}
\label{th:0711_04}In general, if $(x_{0},y_{0})\overset{A}{\rightrightarrows
}x^{\ast}$ (or $(x_{0},y_{0})\overset{A}{\rightarrow}x^{\ast}$), then
$x^{\ast}$ is not necessarily a fixed point of $A$ (though, under additional
assumptions, this may be sufficient -- see Theorem \ref{th:0711_08}). The
following elementary example proves this claim by means of a mixed monotone
mapping with no (coupled) fixed points that has a $(x_{0},y_{0})$%
-order-attractive point.
\end{remark}

\begin{example}
Let $A:\mathbb{R}^{2}\rightarrow\mathbb{R}$ by defined by $A(x,y)=x+\dfrac
{1-\{x\}}{2}$, where $\{x\}$ denotes the fractional part of the real number
$x$. Then $A$ is mixed monotone and $(0,1)\overset{A}{\rightrightarrows}1$,
yet $A$ has no (coupled) fixed points.

First, we prove that $A$ is mixed monotone, which, in this case, is equivalent
to $A$ being nondecreasing (with respect to $x$). Let $x_{1},x_{2}%
\in\mathbb{R}$ such that $x_{1}\leq x_{2}$ and let $n=x_{2}-\{x_{2}\}$ be the
integer part of $x_{2}$. If $x_{1}\in\lbrack n,n+1)$, then $x_{1}=n+\{x_{1}%
\}$, hence $\{x_{2}\}-\{x_{1}\}=x_{2}-x_{1}$ and%
\[
A(x_{2},y)-A(x_{1},y)=x_{2}-x_{1}-\frac{\{x_{2}\}-\{x_{1}\}}{2}=\frac
{x_{2}-x_{1}}{2}\geq0\text{.}%
\]
Else, $x_{1}<n\leq x_{2}<n+1$, hence
\[
A(x_{2},y)\geq A(n,y)
\]
(from the previous case, by letting $x_{1}:=n$) and
\[
A(n,y)-A(x_{1},y)=n-x_{1}+\frac{\{x_{1}\}}{2}>0,
\]
which proves that $A(x_{2},y)\geq A(x_{1},y)$.

Now, choose $x_{0}=0$ and $y_{0}=1$. It is a simple exercise to show (e.g., by
induction) that the corresponding sequences $\left(  x_{n}\right)  ,\left(
y_{n}\right)  $ defined by (\ref{eq:0711_00}) are%
\[
x_{n}=1-\frac{1}{2^{n}},\quad y_{n}=2-\frac{1}{2^{n}}\qquad(n\in\mathbb{N)},
\]
hence%
\[
\sup x_{n}=\inf y_{n}=1\text{,}%
\]
proving that $x^{\ast}=1$ is $(x_{0},y_{0})$-order-attractive for $A$.

Finally, it can be easily noticed that $A$ has no (coupled) fixed points,
since $A(x,y)=x$ if and only if $\{x\}=1$, which is impossible.
\end{example}

\subsection{Fixed point theorems}

We conclude with the main results. In essence, we prove in each of the
following results that for a point $x^{\ast}\in X$ to be a weakly
ordered-attractive fixed point of $A$, it is sufficient (under additional
assumptions) that $x^{\ast}$ is $(x_{k},y_{k})$-weakly ordered-attractive for
some $k\in\mathbb{N}$. In particular, if there exists $k\in\mathbb{N}$ such
that $\sup_{n\geq k}x_{n}=\inf_{n\geq k}y_{n}=x^{\ast}$, then $x^{\ast}$ is an
ordered-attractive fixed point of $A$. In this way, we establish several
simple criteria for the existence, uniqueness and (weakly)
order-attractiveness of the fixed points of mixed monotone operators.

\begin{theorem}
\label{th:0711_05}Let $k\geq1$ such that $\bigcap\limits_{n=0}^{k-1}%
[x_{n},y_{n}]$ is non-empty, and $x^{\ast}\in\bigcap\limits_{n=0}^{k-1}%
[x_{n},y_{n}]$.

If $(x_{k},y_{k})\overset{A}{\rightarrow}x^{\ast}$, then $(x^{\ast},x^{\ast})
$ is the unique coupled fixed point of $A$ in $[x_{n},y_{n}]\times\lbrack
x_{n},y_{n}]$, $x^{\ast}$ is the unique fixed point of $A$ in $[x_{n},y_{n}]$
and $[x_{n},y_{n}]\overset{A}{\rightarrow}x^{\ast}$ for all $n\in
\{0,1,\ldots,k\}$.

Additionally, if $\sup_{n\geq k}x_{n}$ and $\inf_{n\geq k}y_{n}$ exist, then
$[x_{n},y_{n}]\overset{A}{\rightrightarrows}x^{\ast}$ for all $n\in
\{0,1,\ldots,k\}$.
\end{theorem}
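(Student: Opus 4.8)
The plan is to first locate $x^{\ast}$ with respect to the entire iteration, then to promote weak attractiveness of the tail into the fixed-point property, and finally to transfer the conclusions to every initial segment by relabeling and invoking the results already established. First I would observe that $x_{n}\leq x^{\ast}\leq y_{n}$ for every $n\in\mathbb{N}$: for $0\leq n\leq k-1$ this is the assumption $x^{\ast}\in\bigcap_{n=0}^{k-1}[x_{n},y_{n}]$, while for $n\geq k$ it follows from $(x_{k},y_{k})\overset{A}{\rightarrow}x^{\ast}$, which (since $x_{k}\leq y_{k}$ by Lemma~\ref{th:0711_01}(1), and the iterates issued from $(x_{k},y_{k})$ are precisely $(x_{k+j},y_{k+j})_{j\geq 0}$) unwinds to $\bigcap_{n\geq k}[x_{n},y_{n}]=\{x^{\ast}\}$. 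Hence $x^{\ast}\in\bigcap_{n\geq 0}[x_{n},y_{n}]$.

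The step I expect to carry the real content is showing that $x^{\ast}$ is a fixed point of $A$. Since $x^{\ast},x^{\ast}\in[x_{n},y_{n}]$, property (\ref{eq:0711_01}) of Lemma~\ref{th:0711_01}(1) gives $A(x^{\ast},x^{\ast})\in[x_{n+1},y_{n+1}]$ for all $n\geq 0$, so $A(x^{\ast},x^{\ast})\in\bigcap_{n\geq 1}[x_{n},y_{n}]$. This is exactly where the hypothesis $k\geq 1$ is indispensable: because $1\leq k$ we have $\bigcap_{n\geq 1}[x_{n},y_{n}]\subseteq\bigcap_{n\geq k}[x_{n},y_{n}]=\{x^{\ast}\}$, and since $x^{\ast}$ belongs to the left-hand intersection we conclude $\bigcap_{n\geq 1}[x_{n},y_{n}]=\{x^{\ast}\}$, whence $A(x^{\ast},x^{\ast})=x^{\ast}$. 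This is precisely the mechanism by which the present hypotheses strengthen mere weak attractiveness of $(x_{0},y_{0})$, which by Remark~\ref{th:0711_04} alone need not produce a fixed point.

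With $x^{\ast}$ now a fixed point, I would treat all $n\in\{0,1,\ldots,k\}$ uniformly. For such $n$, since $n\leq k$ the same nesting argument gives $\bigcap_{m\geq n}[x_{m},y_{m}]\subseteq\bigcap_{m\geq k}[x_{m},y_{m}]=\{x^{\ast}\}$ together with $x^{\ast}\in\bigcap_{m\geq n}[x_{m},y_{m}]$, hence $\bigcap_{m\geq n}[x_{m},y_{m}]=\{x^{\ast}\}$; as the iterates issued from $(x_{n},y_{n})$ are $(x_{m})_{m\geq n}$ and $(y_{m})_{m\geq n}$, this says $(x_{n},y_{n})\overset{A}{\rightarrow}x^{\ast}$. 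I would then apply Theorem~\ref{th:0711_06}(1) with the initial pair taken to be $(x_{n},y_{n})$ --- legitimate because $x_{n}\leq y_{n}$ and every standing hypothesis holds for an arbitrary comparable pair --- to obtain $[x_{n},y_{n}]\overset{A}{\rightarrow}x^{\ast}$, that $(x^{\ast},x^{\ast})$ is the unique coupled fixed point of $A$ in $[x_{n},y_{n}]\times[x_{n},y_{n}]$, and that $x^{\ast}$ is the unique fixed point of $A$ in $[x_{n},y_{n}]$.

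For the additional statement, assuming that $\sup_{n\geq k}x_{n}$ and $\inf_{n\geq k}y_{n}$ exist, I would show $\sup_{m\geq n}x_{m}=\inf_{m\geq n}y_{m}=x^{\ast}$ for every $n\in\{0,1,\ldots,k\}$, with no monotonicity of the sequences required. Indeed, $x^{\ast}$ is an upper bound of $\{x_{m}:m\geq n\}$ because $x_{m}\leq x^{\ast}$ for all $m$; and any upper bound $w$ of $\{x_{m}:m\geq n\}$ bounds the sub-tail $\{x_{m}:m\geq k\}$ as well, so $w\geq\sup_{m\geq k}x_{m}$. Now Proposition~\ref{th:0711_03}(4), applied to the pair $(x_{k},y_{k})$, combines $(x_{k},y_{k})\overset{A}{\rightarrow}x^{\ast}$ with the existence of the tail supremum and infimum to give $(x_{k},y_{k})\overset{A}{\rightrightarrows}x^{\ast}$, i.e. $\sup_{m\geq k}x_{m}=x^{\ast}$; hence $w\geq x^{\ast}$, so $\sup_{m\geq n}x_{m}=x^{\ast}$, and symmetrically $\inf_{m\geq n}y_{m}=x^{\ast}$. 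Thus $(x_{n},y_{n})\overset{A}{\rightrightarrows}x^{\ast}$, and Theorem~\ref{th:0711_06}(2), once more with initial pair $(x_{n},y_{n})$, delivers $[x_{n},y_{n}]\overset{A}{\rightrightarrows}x^{\ast}$ for all $n\in\{0,1,\ldots,k\}$, completing the argument.
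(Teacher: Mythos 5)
Your proposal is correct and follows essentially the same route as the paper's proof: you establish $\bigcap_{m\geq n}[x_{m},y_{m}]=\{x^{\ast}\}$ for all $n\in\{0,1,\ldots,k\}$ via the nesting of tail intersections (the paper phrases this with the sets $X_{n}=\bigcap_{m\geq n}[x_{m},y_{m}]$ and the chain $X_{0}\subseteq X_{1}\subseteq\ldots\subseteq X_{k}$), extract the fixed-point property from $A(x^{\ast},x^{\ast})\in\bigcap_{m\geq 1}[x_{m},y_{m}]=\{x^{\ast}\}$ exactly as the paper does, and then invoke Theorem~\ref{th:0711_06}(1) with initial pair $(x_{n},y_{n})$. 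The additional statement is likewise handled identically --- Proposition~\ref{th:0711_03}(4) applied at $(x_{k},y_{k})$ followed by transferring $\sup$ and $\inf$ from the tail to each $\{x_{m}:m\geq n\}$ and applying Theorem~\ref{th:0711_06}(2) --- with your explicit upper-bound argument merely spelling out what the paper leaves implicit.
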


\begin{proof}
For each $n\in\mathbb{N}$, let $X_{n}=\bigcap\limits_{m\geq n}[x_{m},y_{m}]$.
It is clear that $(x_{n},y_{n})\overset{A}{\rightarrow}x^{\ast}$ if and only
if\emph{ }$X_{n}=\{x^{\ast}\}$, hence the hypothesis ensure that
$X_{k}=\{x^{\ast}\}$ and
\[
X_{0}=\left(  \bigcap\limits_{n=0}^{k-1}[x_{n},y_{n}]\right)  \cap
X_{k}=\{x^{\ast}\}\text{.}%
\]
Since, obviously, $X_{0}\subseteq X_{1}\subseteq\ldots\subseteq X_{k}\subseteq
X_{k+1}\subseteq\ldots$, we conclude that%
\[
X_{0}=X_{1}=\ldots=X_{k}=\{x^{\ast}\},
\]
hence $(x_{n},y_{n})\overset{A}{\rightarrow}x^{\ast}$ for all $n\in
\{0,1,\ldots,k\}$.

Since $x^{\ast}\in\lbrack x_{n},y_{n}]$ for all $n\in\mathbb{N}$, it follows
by (\ref{eq:0711_01}) that $A(x^{\ast},x^{\ast})\in\lbrack x_{n+1},y_{n+1}]$
for all $n\in\mathbb{N}$, hence $A(x^{\ast},x^{\ast})\in X_{1}=\{x^{\ast}\}$,
proving that $x^{\ast}$ is a fixed point of $A$.

The conclusion now follows by applying Theorem \ref{th:0711_06}(1) with
$(x_{0},y_{0})$ replaced by $(x_{n},y_{n})$ ($n\in\{0,1,\ldots,k\}$).

Additionally, assume that $\sup_{n\geq k}x_{n}$ and $\inf_{n\geq k}y_{n}$
exist, hence
\[
\sup_{n\geq k}x_{n}=\inf_{n\geq k}x_{n}=x^{\ast}%
\]
by Proposition \ref{th:0711_03}(4), with $(x_{0},y_{0})$ replaced by
$(x_{k},y_{k})$. Since $x_{m}\leq x^{\ast}\leq y_{m}$ for all $m\in\mathbb{N}$
(by $X_{0}=\{x^{\ast}\}$), it follows that $x^{\ast}=\sup_{m\geq n}x_{m}%
=\inf_{m\geq n}y_{m}$ for all $n\in\{0,1,\ldots,k\}$, i.e., $(x_{n}%
,y_{n})\overset{A}{\rightrightarrows}x^{\ast}$ for all $n\in\{0,1,\ldots,k\}$
and the proof is complete by further applying Theorem \ref{th:0711_06}(2) with
$(x_{0},y_{0})$ replaced by $(x_{n},y_{n})$ ($n\in\{0,1,\ldots,k\}$).
\end{proof}

\begin{corollary}
\label{th:0711_07}Let $x^{\ast}\in\lbrack x_{0},y_{0}]$. If $(x_{1}%
,y_{1})\overset{A}{\rightarrow}x^{\ast}$, then $(x^{\ast},x^{\ast})$ is the
unique coupled fixed point of $A$ in $[x_{0},y_{0}]\times\lbrack x_{0}%
,y_{0}]\cup\lbrack x_{1},y_{1}]\times\lbrack x_{1},y_{1}]$, $x^{\ast}$ is the
unique fixed point of $A$ in $[x_{0},y_{0}]\cup\lbrack x_{1},y_{1}]$ and
$[x_{0},y_{0}]\overset{A}{\rightarrow}x^{\ast}$, $[x_{1},y_{1}%
]\overset{A}{\rightarrow}x^{\ast}$.

Additionally, if $\sup_{n\geq1}x_{n}$ and $\inf_{n\geq1}y_{n}$ exist, then
$[x_{0},y_{0}]\overset{A}{\rightrightarrows}x^{\ast}$ and $[x_{1}%
,y_{1}]\overset{A}{\rightrightarrows}x^{\ast}$.
\end{corollary}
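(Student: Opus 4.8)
The plan is to recognize this corollary as the special case $k=1$ of Theorem \ref{th:0711_05} and to supply the short extra argument needed to pass from uniqueness on each individual order interval to uniqueness on their union.

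First I would check that the hypotheses of Theorem \ref{th:0711_05} are met with $k=1$: the intersection $\bigcap_{n=0}^{0}[x_n,y_n]$ is just $[x_0,y_0]$, which is non-empty because $x_0\leq y_0$, and by assumption $x^{\ast}\in[x_0,y_0]$. With $(x_1,y_1)\overset{A}{\rightarrow}x^{\ast}$ also assumed, Theorem \ref{th:0711_05} applies and yields, for each $n\in\{0,1\}$, that $x^{\ast}$ is a fixed point of $A$, that $(x^{\ast},x^{\ast})$ is the unique coupled fixed point of $A$ in $[x_n,y_n]\times[x_n,y_n]$, that $x^{\ast}$ is the unique fixed point of $A$ in $[x_n,y_n]$, and that $[x_n,y_n]\overset{A}{\rightarrow}x^{\ast}$. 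In particular $[x_0,y_0]\overset{A}{\rightarrow}x^{\ast}$ and $[x_1,y_1]\overset{A}{\rightarrow}x^{\ast}$, which is exactly the attractiveness claim; the $\overset{A}{\rightrightarrows}$ assertions for $n\in\{0,1\}$ under the existence of $\sup_{n\geq1}x_n$ and $\inf_{n\geq1}y_n$ are likewise the $k=1$ instance of the theorem's final statement, so nothing new is needed there.

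The one point the theorem does not deliver directly is uniqueness on the union $[x_0,y_0]\times[x_0,y_0]\cup[x_1,y_1]\times[x_1,y_1]$ rather than on each factor separately. I would close this gap as follows. From the proof of Theorem \ref{th:0711_05} one has $\bigcap_{m\geq0}[x_m,y_m]=\{x^{\ast}\}$, so $x^{\ast}\in[x_0,y_0]\cap[x_1,y_1]$; hence $(x^{\ast},x^{\ast})$ belongs to both products and is a coupled fixed point of $A$. Conversely, if $(x,y)$ is any coupled fixed point of $A$ lying in the union, then it lies in at least one of the two products, and applying the corresponding uniqueness statement from Theorem \ref{th:0711_05} (with $n=0$ or $n=1$) forces $(x,y)=(x^{\ast},x^{\ast})$. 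This shows that $(x^{\ast},x^{\ast})$ is the unique coupled fixed point of $A$ in the union, and the same case distinction applied to single points gives that $x^{\ast}$ is the unique fixed point of $A$ in $[x_0,y_0]\cup[x_1,y_1]$.

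Since every step is either a direct citation of Theorem \ref{th:0711_05} or an elementary case analysis, I do not anticipate a genuine obstacle. The only subtlety worth flagging is that uniqueness over the union does not follow formally from uniqueness over each region in isolation --- a priori, two distinct intervals could harbour distinct fixed points --- and is secured here precisely because the theorem pins down the fixed point in each region to the \emph{same} value $x^{\ast}$.
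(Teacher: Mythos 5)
Your proposal is correct and is essentially the paper's own proof, which consists of the single line ``This follows by Theorem \ref{th:0711_05} with $k=1$'' (the paper actually cites Theorem \ref{th:0711_06} there, an evident typo for Theorem \ref{th:0711_05}, since only the latter has a parameter $k$). Your added remark about uniqueness on the union $[x_0,y_0]\times[x_0,y_0]\cup[x_1,y_1]\times[x_1,y_1]$ merely makes explicit the trivial step the paper leaves implicit, namely that the theorem pins the fixed point in each region to the same $x^{\ast}$, so the case analysis closes the union claim immediately.
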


\begin{proof}
This follows by Theorem \ref{th:0711_06} with $k=1$.
\end{proof}

By assuming that $(x_{0},y_{0})$ is a coupled lower-upper fixed point of $A$,
we obtain the following results.

\begin{theorem}
\label{th:0711_08}Let $x^{\ast}\in X$ and assume that $(x_{0},y_{0})$ is a
coupled lower-upper fixed point of $A$. If $(x_{0},y_{0}%
)\overset{A}{\rightarrow}x^{\ast}$, then $(x^{\ast},x^{\ast})$ is the unique
coupled fixed point of $A$ in $[x_{0},y_{0}]\times\lbrack x_{0},y_{0}]$,
$x^{\ast}$ is the unique fixed point of $A$ in $[x_{0},y_{0}]$ and
$[x_{0},y_{0}]\overset{A}{\rightarrow}x^{\ast}$.

Additionally, if $\sup x_{n}$ and $\inf y_{n}$ exist, then $[x_{0}%
,y_{0}]\overset{A}{\rightrightarrows}x^{\ast}$.
\end{theorem}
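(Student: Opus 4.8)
The plan is to reduce the entire statement to Theorem \ref{th:0711_06}, whose only hypothesis beyond weak order-attractiveness is that $x^{\ast}$ be a fixed point of $A$. Concretely, once I have shown $A(x^{\ast},x^{\ast})=x^{\ast}$, the three conclusions of the first paragraph — that $(x^{\ast},x^{\ast})$ is the unique coupled fixed point of $A$ in $[x_0,y_0]\times[x_0,y_0]$, that $x^{\ast}$ is the unique fixed point in $[x_0,y_0]$, and that $[x_0,y_0]\overset{A}{\rightarrow}x^{\ast}$ — follow at once from Theorem \ref{th:0711_06}(1) applied to the hypothesis $(x_0,y_0)\overset{A}{\rightarrow}x^{\ast}$. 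So the real content is to produce the fixed-point property, and this is exactly where the coupled lower-upper assumption enters.

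The central step is therefore to prove that $x^{\ast}$ is a fixed point. First I would invoke Lemma \ref{th:0711_01}(3): since $(x_0,y_0)$ is a coupled lower-upper fixed point, $(x_n)$ is nondecreasing and $(y_n)$ is nonincreasing, so the intervals are nested, $[x_{n+1},y_{n+1}]\subseteq[x_n,y_n]$ for every $n$. From $(x_0,y_0)\overset{A}{\rightarrow}x^{\ast}$ we have $\bigcap_{n\geq0}[x_n,y_n]=\{x^{\ast}\}$, so in particular $x^{\ast}\in[x_n,y_n]$ for all $n$. Applying \eqref{eq:0711_01} with $x=y=x^{\ast}$ then gives $A(x^{\ast},x^{\ast})\in[x_{n+1},y_{n+1}]$ for every $n$, i.e.\ $A(x^{\ast},x^{\ast})\in\bigcap_{n\geq1}[x_n,y_n]$. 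Because the intervals are nested, this intersection coincides with $\bigcap_{n\geq0}[x_n,y_n]=\{x^{\ast}\}$, whence $A(x^{\ast},x^{\ast})=x^{\ast}$.

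I expect this to be the main obstacle, and the only place where the coupled lower-upper hypothesis is genuinely used. Remark \ref{th:0711_04} and the fractional-part example show that $(x_0,y_0)\overset{A}{\rightarrow}x^{\ast}$ by itself does \emph{not} force $x^{\ast}$ to be a fixed point; what rescues the argument here is precisely the nesting of the intervals supplied by Lemma \ref{th:0711_01}(3), which collapses the a priori larger set $\bigcap_{n\geq1}[x_n,y_n]$ back onto the singleton $\{x^{\ast}\}$. Without the monotonicity of $(x_n)$ and $(y_n)$ one would only land in $\bigcap_{n\geq1}[x_n,y_n]$, which need not equal $\{x^{\ast}\}$.

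Finally, for the additional assertion, if $\sup x_n$ and $\inf y_n$ exist I would upgrade $(x_0,y_0)\overset{A}{\rightarrow}x^{\ast}$ to $(x_0,y_0)\overset{A}{\rightrightarrows}x^{\ast}$ by Proposition \ref{th:0711_03}(4), and then conclude $[x_0,y_0]\overset{A}{\rightrightarrows}x^{\ast}$ from Theorem \ref{th:0711_06}(2), using the fixed-point property of $x^{\ast}$ already established in the central step.
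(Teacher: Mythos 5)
Your proposal is correct and takes essentially the same route as the paper: where you establish $A(x^{\ast},x^{\ast})=x^{\ast}$ directly from the nested intervals (Lemma \ref{th:0711_01}(3)) and \eqref{eq:0711_01} and then invoke Theorem \ref{th:0711_06}(1), the paper derives $X_{0}=X_{1}=\{x^{\ast}\}$ from the same monotonicity and routes the identical fixed-point step through Corollary \ref{th:0711_07} (i.e., Theorem \ref{th:0711_05} with $k=1$, whose proof contains precisely your argument that $A(x^{\ast},x^{\ast})\in X_{1}=\{x^{\ast}\}$). Your treatment of the additional assertion, via Proposition \ref{th:0711_03}(4) followed by Theorem \ref{th:0711_06}(2), coincides with the paper's word for word.
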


\begin{proof}
We use the same notations as in the proof of Theorem \ref{th:0711_05}. Since
$(x_{0},y_{0})$ is a coupled lower-upper fixed point of $A$, it follows by
Lemma \ref{th:0711_01} that
\[
x_{0}\leq x_{1}\leq\ldots\leq x_{n}\leq x_{n+1}\leq\ldots\leq y_{n+1}\leq
y_{n}\leq\ldots\leq y_{1}\leq y_{0},
\]
hence $X_{0}=X_{1}$. Since $(x_{0},y_{0})\overset{A}{\rightarrow}x^{\ast}$, we
conclude that $X_{0}=X_{1}=\{x\}$, hence $(x_{1},y_{1})\overset{A}{\rightarrow
}x^{\ast}$ and $x^{\ast}\in\lbrack x_{0},y_{0}]$. The conclusion now follows
by Corollary \ref{th:0711_07}.

Additionally, if $\sup x_{n}$ and $\inf y_{n}$ exist, then $(x_{0}%
,y_{0})\overset{A}{\rightrightarrows}x^{\ast}$ by Proposition \ref{th:0711_03}%
(4), hence $[x_{0},y_{0}]\overset{A}{\rightrightarrows}x^{\ast}$ by Theorem
\ref{th:0711_06}(2), which concludes the proof.
\end{proof}

\begin{remark}
In the conditions of Theorem \ref{th:0711_08}, $x^{\ast}\in\lbrack x_{n}%
,y_{n}]\subseteq\lbrack x_{0},y_{0}]$ for all $n\in\mathbb{N}$, hence the
conclusion of the theorem already contains that $(x^{\ast},x^{\ast})$ is the
unique coupled fixed point of $A$ in $[x_{n},y_{n}]\times\lbrack x_{n},y_{n}]$
and $[x_{n},y_{n}]\overset{A}{\rightarrow}x^{\ast}$ for all $n\geq1$, without
explicitly stating it.
\end{remark}

In many cases, it is possible that the starting pair of the iterative process
is not a coupled lower-upper fixed point, but we arrive to such a pair after
several iterations. This situation is studied next.

\begin{theorem}
\label{th:0711_09}Let $x^{\ast}\in\lbrack x_{0},y_{0}]$ and assume there
exists $k\geq1$ such that $(x_{k},y_{k})$ is a coupled lower-upper fixed point
of $A$. If $(x_{k},y_{k})\overset{A}{\rightarrow}x^{\ast}$, then $(x^{\ast
},x^{\ast})$ is the unique coupled fixed point of $A$ in $[x_{n},y_{n}%
]\times\lbrack x_{n},y_{n}]$, $x^{\ast}$ is the unique fixed point of $A$ in
$[x_{n},y_{n}]$ and $[x_{n},y_{n}]\overset{A}{\rightarrow}x^{\ast}$ for all
$n\in\{0,1,\ldots,k\}$.

Additionally, if $\sup_{n\geq k}x_{n}$ and $\inf_{n\geq k}y_{n}$ exist, then
$[x_{n},y_{n}]\overset{A}{\rightrightarrows}x^{\ast}$ for all $n\in
\{0,1,\ldots,k\}$.
\end{theorem}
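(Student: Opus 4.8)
The plan is to reduce the statement to the two previously established results, Theorem \ref{th:0711_08} and Theorem \ref{th:0711_05}, by first promoting $x^{\ast}$ to a genuine fixed point of $A$ and then locating it inside every interval $[x_n,y_n]$ with $0\le n<k$. Throughout I would reuse the notation $X_n=\bigcap_{m\ge n}[x_m,y_m]$ from the proof of Theorem \ref{th:0711_05}, so that $(x_n,y_n)\overset{A}{\rightarrow}x^{\ast}$ is equivalent to $X_n=\{x^{\ast}\}$.

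First I would observe that the iteration started at $(x_k,y_k)$ is merely a shift of the original one: by associativity of the $s$-composition, $A^n(x_k,y_k)=x_{k+n}$ and $A^n(y_k,x_k)=y_{k+n}$, while $x_k\le y_k$ by Lemma \ref{th:0711_01}(1). Since $(x_k,y_k)$ is a coupled lower-upper fixed point of $A$ and $(x_k,y_k)\overset{A}{\rightarrow}x^{\ast}$, Theorem \ref{th:0711_08} applies verbatim with $(x_0,y_0)$ replaced by $(x_k,y_k)$; in particular it yields that $x^{\ast}$ is a fixed point of $A$ (and that $X_k=\{x^{\ast}\}$, which also follows straight from the hypothesis). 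This is the decisive step, because the coupled lower-upper fixed point assumption enters only here, precisely to turn the weakly order-attractive point $x^{\ast}$ into an actual fixed point.

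Next, knowing that $(x^{\ast},x^{\ast})$ is a coupled fixed point of $A$ with $x^{\ast}\in[x_0,y_0]$, I would invoke Lemma \ref{th:0711_01}(2) to deduce $x^{\ast}\in\bigcap_{n\ge0}[x_n,y_n]$. In particular $x^{\ast}\in\bigcap_{n=0}^{k-1}[x_n,y_n]$, so this finite intersection is non-empty and contains $x^{\ast}$. At this point every hypothesis of Theorem \ref{th:0711_05} is satisfied: $k\ge1$, the set $\bigcap_{n=0}^{k-1}[x_n,y_n]$ is non-empty, $x^{\ast}$ lies in it, and $(x_k,y_k)\overset{A}{\rightarrow}x^{\ast}$.

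Finally, applying Theorem \ref{th:0711_05} delivers exactly the stated conclusions for all $n\in\{0,1,\ldots,k\}$: uniqueness of $(x^{\ast},x^{\ast})$ as a coupled fixed point in each $[x_n,y_n]\times[x_n,y_n]$, uniqueness of $x^{\ast}$ as a fixed point in each $[x_n,y_n]$, and $[x_n,y_n]\overset{A}{\rightarrow}x^{\ast}$; moreover, under the additional existence of $\sup_{n\ge k}x_n$ and $\inf_{n\ge k}y_n$, the stronger attraction $[x_n,y_n]\overset{A}{\rightrightarrows}x^{\ast}$. The only point requiring care is the bootstrapping order: one cannot place $x^{\ast}$ in the intervals $[x_n,y_n]$ for $n<k$ until it is known to be a fixed point, and it is exactly the coupled lower-upper fixed point hypothesis at stage $k$, routed through Theorem \ref{th:0711_08}, that supplies this missing ingredient.
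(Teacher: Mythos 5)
Your proposal is correct and follows essentially the same route as the paper's proof: apply Theorem \ref{th:0711_08} with $(x_{0},y_{0})$ replaced by $(x_{k},y_{k})$ to obtain that $x^{\ast}$ is a fixed point of $A$, then use Lemma \ref{th:0711_01}(2) to place $x^{\ast}$ in $\bigcap_{n=0}^{k-1}[x_{n},y_{n}]$, and conclude via Theorem \ref{th:0711_05}. Your extra remarks (the shift identity $A^{n}(x_{k},y_{k})=x_{k+n}$ and the explicit verification of the hypotheses of Theorem \ref{th:0711_05}) are sound details that the paper leaves implicit.
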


\begin{proof}
By applying Theorem \ref{th:0711_08}, with $(x_{0},y_{0})$ replaced by
$(x_{k},y_{k})$, it follows that $x^{\ast}$ is a fixed point of $A$ and, since
$x^{\ast}\in\lbrack x_{0},y_{0}]$, it follows by Lemma \ref{th:0711_01}(2)
that $x^{\ast}\in\bigcap\limits_{n\geq0}\left[  x_{n},y_{n}\right]  $, hence
$x^{\ast}\in\bigcap\limits_{n=0}^{k-1}[x_{n},y_{n}]$. The conclusion now
follows by Theorem \ref{th:0711_05}.
\end{proof}

\begin{remark}
In the conditions of Theorem \ref{th:0711_09}, $x^{\ast}\in\lbrack x_{n}%
,y_{n}]\subseteq\lbrack x_{k},y_{k}]$ for all $n\geq k$, hence the conclusion
of the theorem already contains that $(x^{\ast},x^{\ast})$ is the unique
coupled fixed point of $A$ in $[x_{n},y_{n}]\times\lbrack x_{n},y_{n}]$ and
$[x_{n},y_{n}]\overset{A}{\rightarrow}x^{\ast}$ for all $n\geq k+1$.
\end{remark}

\section{Application}

As an application, we present a fixed point result for a class of mixed
monotone operators in the setting of ordered linear spaces. First, recall some
notions and results.

\subsection{Some preliminaries on ordered linear spaces}

Let $(X,K)$ be an ordered linear space over $\mathbb{R}$, i.e., $X$ is a real
linear space and $K\subseteq X$ a cone in $X$ (i.e., a convex set such that
$\lambda K\subseteq K$ for all $\lambda\geq0$ and $K\cap(-K)=\{\theta\}$,
where $\theta$ denotes the zero element in $X$). Then the relation on $X$
defined by $x\leq y\Leftrightarrow y-x\in K$ is a linear order on $X$, i.e.,
an order that satisfies:

\begin{enumerate}
\item[(i)] $x,y,z\in X:x\leq y\Rightarrow x+z\leq y+z$.

\item[(ii)] $x,y\in X,~\lambda\geq0:x\leq y\Rightarrow\lambda x\leq\lambda y$.
\end{enumerate}

It is said that $K$ is \emph{Archimedean} if $x\leq\theta$ whenever there
exists $y\in X$ such that $nx\leq y$ for all $n\in\mathbb{N}$. It is well
known that if $K$ is Archimedean, then for every $x,y\in X$, $\lambda
\in\mathbb{R}$ and every nonincreasing sequence $\left(  \lambda_{n}\right)  $
convergent to $\lambda$:
\[
x\leq\lambda_{n}y~\text{for all~}n\in\mathbb{N}\Rightarrow x\leq\lambda y.
\]

Two elements $x,y$ in $K$ are said to be \emph{linked} (cf.
\cite{Thompson1963}) if there exists $\lambda\in(0,1)$ such that $\lambda
x\leq y$ and $\lambda y\leq x$. This is an equivalence which splits $K$ into
disjoint components (called \emph{parts}).

For further details on these topics we refer to, e.g., \cite{Jameson1968}.

In order to state and prove the main result in this Section, we need to
consider some new notions.

\begin{definition}
A sequence $(x_{n})$ in $X$ is said to be:

\begin{enumerate}
\item[(i)] \emph{upper self-bounded} if for every $\mu>1$ exists
$k\in\mathbb{N}$ such that $x_{n}\leq\mu x_{k}$ for all $n\geq k$;

\item[(ii)] \emph{lower self-bounded} if for every $\lambda\in(0,1)$ exists
$k\in\mathbb{N}$ such that $\lambda x_{k}\leq x_{n}$ for all $n\geq k$.
\end{enumerate}
\end{definition}

\begin{example}
Every nondecreasing sequence in $K$ is lower self-bounded. Similarly, every
nonincreasing sequence in $K$ is upper self-bounded.
\end{example}

\begin{definition}
$K$ is said to be \emph{self-complete} if every nondecreasing sequence in $K$
that is upper self-bounded has supremum.
\end{definition}

\begin{remark}
It is not hard to prove the following equivalence: $K$ is self-complete if and
only if every nonincreasing sequence in $K$ that is lower self-bounded has
infimum. Since this result is not essential in our arguments, we omit its proof.
\end{remark}

\begin{example}
Let $n\in\mathbb{N}$, $n\geq1$ and
\[
\mathbb{R}_{+}^{n}=\left\{  x=(x^{1},x^{2},\ldots,x^{n})\in\mathbb{R}%
^{n}:x^{i}\geq0\text{ for all }i\in\{1,2,\ldots,n\}\right\}
\]
be the nonnegative cone in $\mathbb{R}^{n}$. Then $\mathbb{R}_{+}^{n}$ is
Archimedean and self-complete.

Indeed, $\mathbb{R}_{+}^{n}$ is Archimedean since for every $x,y\in
\mathbb{R}^{n}$:%
\begin{align*}
nx\leq y\text{ for all }n\in\mathbb{N}  &  \Leftrightarrow x^{i}\leq
\frac{y^{i}}{n}\text{ for all }n\in\mathbb{N}\text{, }i\in\{1,2,\ldots,n\}\\
&  \mathbb{\Rightarrow}x^{i}\leq0\text{ for all }i\in\{1,2,\ldots
,n\}\Leftrightarrow x\leq\theta\text{.}%
\end{align*}
Also, if $(x_{n})$ is a nondecreasing sequence in $\mathbb{R}_{+}^{n}$ that is
upper self-bounded, then for every $i\in\{1,2,\ldots,n\}$ the sequence
$(x_{n}^{i})$ is nondecreasing and bounded (in $\mathbb{R})$, hence has
supremum, which concludes the argument.
\end{example}

\begin{example}
Let $Q$ be a compact Hausdorff topological space and $C(Q)$ be the linear
space of all real valued continuous functions on $Q$, while
\[
K=\left\{  x\in C(Q):x(t)\geq0\text{ for all }t\in Q\right\}
\]
is the cone of all nonnegative functions in $C(Q)$. Then $K$ is Archimedean
and self-complete.

Indeed, $K$ is Archimedean since for every $x,y\in C(Q)$:%
\[
nx\leq y\text{ for all }n\in\mathbb{N}\Leftrightarrow x(t)\leq\frac{y(t)}%
{n}\text{ for all }n\in\mathbb{N}\text{, }t\in Q\Rightarrow x(t)\leq0\text{
for all }t\in Q\text{.}%
\]
Next, let $(x_{n})$ be a nondecreasing sequence in $K$ that is upper
self-bounded and let $k\in\mathbb{N}$ be such that $x_{n}\leq2x_{k}$ for all
$n\geq k$, hence%
\begin{equation}
0\leq x_{n}(t)\leq2x_{k}(t)\leq2M\quad\text{for all }t\in Q\text{ and }%
n\in\mathbb{N}, \label{eq:0711_03}%
\end{equation}
where $M=\sup_{t}x_{k}(t)$. Now, let $x:Q\rightarrow\mathbb{R}$ be given by
\[
x(t)=\sup_{n}x_{n}(t)\quad(t\in Q)\text{.}%
\]
Clearly, $x$ is correctly defined, i.e., $x(t)$ is finite for all $t\in Q$, by
(\ref{eq:0711_03}). In order to show that $x=\sup x_{n}$ (in the ordered
linear space $(C(Q),K)$), we only need to prove that $x$ is continuous.

Let $\varepsilon>0$ and let $n_{\varepsilon}$ be such that%
\[
x_{n}(t)\leq\left(  1+\frac{\varepsilon}{2M}\right)  x_{n_{\varepsilon}%
}(t)\quad\text{for all }t\in Q\text{ and }n\in\mathbb{N}\text{,}%
\]
hence%
\[
x_{n}(t)\leq x(t)\leq\left(  1+\frac{\varepsilon}{2M}\right)  x_{n}%
(t)\quad\text{for all }t\in Q\text{ and }n\geq n_{\varepsilon}\text{,}%
\]
and by using (\ref{eq:0711_03}), we finally obtain that
\[
0\leq x(t)-x_{n}(t)\leq\frac{\varepsilon}{2M}x_{n}(t)\leq\varepsilon
\quad\text{for all }t\in Q\text{ and }n\geq n_{\varepsilon}\text{,}%
\]
which proves that $(x_{n})$ uniformly converges to $x$, hence $x$ is continuous.
\end{example}

\subsection{A fixed point theorem}

We conclude with a result which establishes the existence, uniqueness and
order-attractiveness of fixed points for a class of mixed monotone operators,
in the context of ordered linear spaces endowed with an Archimedean and
self-complete cone. Our result complements and generalizes \cite[Cor.
3.2]{Chen1993}, \cite[Th. 1]{Guo1988}, \cite[Th. 2.9]{Opoitsev1978a},
\cite[Th. 2.1]{Wu2006}, \cite[Th. 1]{Xu2005a}.

\begin{theorem}
Let $(X,K)$ be an ordered linear space over $\mathbb{R}$ such that $K$ is
Archimedean and self-complete. Let $P$ be a part of $K$ and $A:P\times
P\rightarrow K$ a mixed monotone operator.

Assume there exists $\varphi:(0,1)\rightarrow(0,1]$ such that $\varphi
(\lambda)>\lambda$ for all $\lambda\in(0,1)$ and%
\begin{equation}
A(\lambda x,y)\geq\varphi(\lambda)A(x,\lambda y)\text{\quad for all }%
\lambda\in(0,1)\text{ and }x,y\in P\text{ linearly dependent.}
\label{eq:0711_04}%
\end{equation}

If there exists $u\in P$ such that $A(u,u)\in P$, then the following
conclusions hold:

\begin{enumerate}
\item for every $x,y\in P$, there exists $(x_{0},y_{0})\in P\times P$ a
coupled lower-upper fixed point of $A$ such that $x,y\in\lbrack x_{0},y_{0}]$;

\item $A(P\times P)\subseteq P$;

\item there exists $x^{\ast}\in P$ such that $(x^{\ast},x^{\ast})$ is the
unique coupled fixed point of $A$ in $P\times P$, $x^{\ast}$ is the unique
fixed point of $A$ in $P$ and $[x_{0},y_{0}]\overset{A}{\rightrightarrows
}x^{\ast}$ for every $(x_{0},y_{0})\in P\times P$ coupled lower-upper fixed
point of $A$.
\end{enumerate}
\end{theorem}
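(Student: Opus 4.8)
The plan is to establish the three conclusions in the order (2), (1), (3), with the last reducing to Theorem~\ref{th:0711_08} and Corollary~\ref{th:0711_07}. The engine throughout is a rescaling identity obtained from (\ref{eq:0711_04}) by replacing $y$ with $\tfrac1\lambda y$: for every $\lambda\in(0,1)$ and all linearly dependent $x,y\in P$,
\[
A(\lambda x,\tfrac1\lambda y)\ge\varphi(\lambda)\,A(x,y).
\]
Iterating this $N$ times along a fixed ray gives $A(\lambda^{N}x,\lambda^{-N}y)\ge\varphi(\lambda)^{N}A(x,y)$ (and dually $A(\lambda^{-N}x,\lambda^{N}y)\le\varphi(\lambda)^{-N}A(x,y)$), the decisive point being that $\varphi(\lambda)/\lambda>1$ is amplified into $(\varphi(\lambda)/\lambda)^{N}\to\infty$.

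For (2), I would fix $x,y\in P$ and use that $x,y,u$ lie in the single part $P$, hence are pairwise linked: there is $c\in(0,1)$ with $cu\le x,y\le\tfrac1c u$. Mixed monotonicity sandwiches $A(cu,\tfrac1c u)\le A(x,y)\le A(\tfrac1c u,cu)$, while the rescaling identity with $x=y=u$ gives $A(cu,\tfrac1c u)\ge\varphi(c)A(u,u)$ and $A(\tfrac1c u,cu)\le\tfrac1{\varphi(c)}A(u,u)$. Thus $\varphi(c)A(u,u)\le A(x,y)\le\tfrac1{\varphi(c)}A(u,u)$, so $A(x,y)$ is linked to $A(u,u)\in P$ and therefore lies in $P$.

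For (1), take the reference $v=u$ and a fixed $\lambda\in(0,1)$; since $A(u,u)\in P$ is linked to $u$, fix $r\in(0,1)$ with $ru\le A(u,u)\le\tfrac1r u$. Setting $x_{0}=\lambda^{N}u$ and $y_{0}=\lambda^{-N}u$, the iterated identity yields $A(x_0,y_0)\ge\varphi(\lambda)^{N}A(u,u)\ge\varphi(\lambda)^{N}r\,u$ and $A(y_0,x_0)\le\tfrac1{\varphi(\lambda)^{N}r}u$, so both lower--upper fixed point inequalities $x_{0}\le A(x_{0},y_{0})$ and $A(y_{0},x_{0})\le y_{0}$ reduce to the single scalar condition $(\lambda/\varphi(\lambda))^{N}\le r$, which holds for all large $N$. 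Enlarging $N$ further (using that $x,y,u$ are linked) also forces $x,y\in[\lambda^{N}u,\lambda^{-N}u]$, producing the required coupled lower--upper fixed point containing $x,y$.

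Finally, for (3) I would start from any coupled lower--upper fixed point $(x_{0},y_{0})$ and form the sequences of Lemma~\ref{th:0711_01}: $(x_{n})$ nondecreasing, $(y_{n})$ nonincreasing, $x_{n}\le y_{n}$, all in $P$. Let $\lambda_{n}\in(0,1]$ be the largest scalar with $\lambda_{n}y_{n}\le x_{n}$ (it exists by the Archimedean property). The crucial per-step gain is: using $\lambda_{n}y_{n}\le x_{n}$, monotonicity, and (\ref{eq:0711_04}) with $x=y=y_{n}$, one gets $x_{n+1}=A(x_{n},y_{n})\ge A(\lambda_{n}y_{n},y_{n})\ge\varphi(\lambda_{n})A(y_{n},\lambda_{n}y_{n})\ge\varphi(\lambda_{n})A(y_{n},x_{n})=\varphi(\lambda_{n})y_{n+1}$, hence $\lambda_{n+1}\ge\varphi(\lambda_{n})>\lambda_{n}$. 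So $(\lambda_{n})$ increases and is bounded by $1$. The infimum $y^{\ast}=\inf y_{n}$ exists (a nonincreasing sequence in $K$ is lower self-bounded and $K$ is self-complete in the equivalent form for infima), and $y^{\ast}\in\bigcap_{n}[x_{n},y_{n}]$. \emph{The main obstacle is proving $\lambda_{n}\to1$}, which is exactly where the strict inequality $\varphi(\lambda)>\lambda$ must be used to rule out stalling of the bounded increasing sequence below $1$. Granting it, the bound $y_{n}-x_{n}\le(1-\lambda_{n})y_{0}$ and the Archimedean property collapse $\bigcap_{n}[x_{n},y_{n}]$ to $\{y^{\ast}\}$, while $x_{n}\le\tfrac1{\lambda_{k}}x_{k}$ (for $n\ge k$) gives upper self-boundedness of $(x_{n})$, hence $\sup x_{n}=y^{\ast}=:x^{\ast}$ exists. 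Theorem~\ref{th:0711_08} then delivers that $x^{\ast}$ is the unique fixed point in $[x_{0},y_{0}]$ with $[x_{0},y_{0}]\overset{A}{\rightrightarrows}x^{\ast}$; global uniqueness on $P\times P$ follows by invoking (1) to enclose any further (coupled) fixed point together with $x^{\ast}$ in a common interval and applying the interval uniqueness there.
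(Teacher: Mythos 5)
There is one genuine gap, and it sits exactly where you flagged it: in part (3) you write ``granting'' that $\lambda_n\to 1$, but this is the crux of the whole argument and it does \emph{not} follow from $\varphi(\lambda)>\lambda$ alone. Since $\varphi$ is not assumed monotone or continuous, a bounded increasing sequence driven by $\lambda_{n+1}\geq\varphi(\lambda_n)$ can stall strictly below $1$: for instance, if $\varphi(\lambda)=(\lambda+\frac12)/2$ on $(0,\frac12)$ (extended arbitrarily above $\frac12$ so that $\varphi(\lambda)>\lambda$ everywhere), the recursion started below $\frac12$ converges to $\frac12$; the value $\varphi(\lambda_*)$ at the limit is never interrogated because your $\lambda_n$ stay strictly below $\lambda_*$. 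The paper closes this hole with a preliminary normalization that your proposal is missing: replace $\varphi$ by $\phi(\lambda)=\sup\Phi(\lambda)$, where $\Phi(\lambda)$ is the set of admissible constants $\eta\in(0,1]$ in (\ref{eq:0711_04}); the Archimedean property of $K$ guarantees $\phi(\lambda)\in\Phi(\lambda)$, and the decisive gain is the super-multiplicativity $\phi(\lambda)\phi(\mu)\leq\phi(\lambda\mu)$ of (\ref{eq:0711_04a}). Then, if $\lambda_n\uparrow\lambda_*<1$, one gets $\lambda_{n+1}\geq\phi(\lambda_n)\geq\phi(\lambda_*)\phi(\lambda_n/\lambda_*)>\phi(\lambda_*)\lambda_n/\lambda_*$, and letting $n\to\infty$ yields $\lambda_*\geq\phi(\lambda_*)>\lambda_*$, a contradiction. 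Note that your maximal choice of $\lambda_n$ automatically satisfies $\lambda_{n+1}\geq\phi(\lambda_n)$, so this patch drops straight into your argument; but without some such upgraded property of $\varphi$ the step is a real gap, not a routine detail.

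Everything else is sound and runs parallel to the paper, with one genuinely nice variation: your iterated rescaling identity $A(\lambda^{N}x,\lambda^{-N}y)\geq\varphi(\lambda)^{N}A(x,y)$ (and its dual) is correct and lets you prove (1) and (2) \emph{without} the normalization, whereas the paper needs $\varphi(\lambda_0^{n_0})\geq(\varphi(\lambda_0))^{n_0}$ from (\ref{eq:0711_04a}) even there; your direct sandwich proof of (2) is also simpler than the paper's, which obtains $A(P\times P)\subseteq P$ as a byproduct of the chain establishing (1). The per-step estimate $x_{n+1}\geq\varphi(\lambda_n)y_{n+1}$, the upper self-boundedness of $(x_n)$, the use of self-completeness and the Archimedean property, and the reduction to Theorem \ref{th:0711_08} all match the paper's proof. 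Three small repairs: handle the degenerate case $x_k=y_k$ separately (there your maximal $\lambda_k$ equals $1$ and $\varphi(\lambda_k)$ is undefined), as the paper does at the outset; you can avoid invoking the unproved ``infimum form'' of self-completeness, since your own construction of $\sup x_n$ together with $y_n\leq\lambda_n^{-1}x_n$ and the Archimedean property already give $\inf y_n=\sup x_n$, as in the paper; and note that your enclosure argument for global uniqueness on $P\times P$ is valid but differs from the paper, which proves uniqueness directly at the start via a scaling argument on $T=\{\lambda>0:\lambda x^{\ast}\leq y^{\ast}\leq\lambda^{-1}x^{\ast}\}$ --- either route works once the gap above is closed.
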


\begin{proof}
First, we prove that $A$ has at most a fixed point in $P$. For that, assume
$x^{\ast},y^{\ast}\in P$ be two distinct fixed points of $A$. Let%
\[
T=\left\{  \lambda>0:\lambda x^{\ast}\leq y^{\ast}\leq\lambda^{-1}x^{\ast
}\right\}
\]
and $\lambda_{\ast}=\sup T$. Obviously, $T$ is nonempty since $x^{\ast
},y^{\ast}$ are in the same part of $K$, and $\lambda_{\ast}\in T\subseteq
(0,1)$ since $K$ is Archimedean and $x^{\ast}\neq y^{\ast}$. Then, by
(\ref{eq:0711_04}) and the mixed monotonicity of $A$,%
\begin{equation}
\varphi(\lambda_{\ast})x^{\ast}=\varphi(\lambda_{\ast})A(x^{\ast},x^{\ast
})\leq A(\lambda_{\ast}x^{\ast},\lambda_{\ast}^{-1}x^{\ast})\leq A(y^{\ast
},y^{\ast})=y^{\ast}\text{,} \label{eq:0711_07}%
\end{equation}
hence $\varphi(\lambda_{\ast})x^{\ast}\leq y^{\ast}$. Due to the symmetry, one
also has $\varphi(\lambda_{\ast})y^{\ast}\leq x^{\ast}$ which shows that
$\varphi(\lambda_{\ast})\in T$, hence $\varphi(\lambda_{\ast})\leq
\lambda_{\ast}$, which contradicts the hypothesis on $\varphi$. Concluding,
$A$ has at most a fixed point in $P$.\medskip

By following the same argument as before, we have that if $(x^{\ast},y^{\ast
})$ is a coupled fixed point of $A$ in $P\times P$, then $x^{\ast}=y^{\ast}$;
the only difference from the previous argument is that (\ref{eq:0711_07}) is
replaced by:%
\[
\varphi(\lambda_{\ast})x^{\ast}=\varphi(\lambda_{\ast})A(x^{\ast},y^{\ast
})\leq A(\lambda_{\ast}x^{\ast},\lambda_{\ast}^{-1}y^{\ast})\leq A(y^{\ast
},x^{\ast})=y^{\ast}\text{.}%
\]
\medskip

The next step in our proof is to claim that $\varphi$ can be assumed to
satisfy%
\begin{equation}
\varphi(\lambda)\varphi(\mu)\leq\varphi(\lambda\mu)\quad\text{for all }%
\lambda,\mu\in(0,1) \label{eq:0711_04a}%
\end{equation}
without any loss of generality. In order to prove this, define the set%
\[
\Phi(\lambda)=\left\{  \eta\in(0,1]:A(\lambda x,y)\geq\eta A(x,\lambda
y)\text{ for all }x,y\in P\text{ linearly dependent}\right\}
\]
for every $\lambda\in(0,1)$ and consider the function $\phi:(0,1)\rightarrow
(0,1]$ given by
\[
\phi(\lambda)=\sup\Phi(\lambda)\quad(\lambda\in(0,1))\text{.}%
\]
Since $\varphi(\lambda)\in\Phi(\lambda)$ for all $\lambda\in(0,1)$, then
$\phi$ is correctly defined and $\phi(\lambda)\geq\varphi(\lambda)>\lambda$
for all $\lambda\in(0,1)$. Also, $\phi(\lambda)\in$ $\Phi(\lambda)$ since $K$
is Archimedean, hence
\[
A(\lambda x,y)\geq\phi(\lambda)A(x,\lambda y)\text{\quad for all }\lambda
\in(0,1)\text{ and }x,y\in P\text{ linearly dependent.}%
\]
Moreover, for all $\lambda,\mu\in(0,1)$ and $x,y\in P$ linearly dependent,
\[
A(\lambda\mu x,y)\geq\phi(\lambda)A(\mu x,\lambda y)\geq\phi(\lambda)\phi
(\mu)A(x,\lambda\mu y)
\]
which shows that $\phi(\lambda)\phi(\mu)\in\Phi(\lambda\mu)$, hence
$\phi(\lambda)\phi(\mu)\leq\phi(\lambda\mu)$. It is clear now that by
replacing $\varphi$ with $\phi$, we obtain the desired property
(\ref{eq:0711_04a}).\bigskip

Next, since $u$ and $A(u,u)$ are in the same part of $K$, there exists
$\lambda_{0}\in(0,1)$ such that
\begin{equation}
\lambda_{0}u\leq A(u,u)\leq\lambda_{0}^{-1}u. \label{eq:0711_05}%
\end{equation}
Also, $\lim_{n\rightarrow\infty}\left(  \frac{\varphi(\lambda_{0})}%
{\lambda_{0}}\right)  ^{n}=\infty$ since $\frac{\varphi(\lambda_{0})}%
{\lambda_{0}}>1$, hence there exists $k_{0}\in\mathbb{N}$ such that $\left(
\frac{\varphi(\lambda_{0})}{\lambda_{0}}\right)  ^{n}\geq\lambda_{0}^{-1}$ for
all $n\geq k_{0}$, i.e.,%
\begin{equation}
\lambda_{0}^{n}\leq\left(  \varphi(\lambda_{0})\right)  ^{n}\lambda
_{0}\text{\quad for all }n\geq k_{0}\text{.} \label{eq:0711_06}%
\end{equation}

Now, consider arbitrary $x,y\in P$. Since $x$, $y$ are in the same part of the
cone with $u$, there exists $n_{0}\geq k_{0}$ large enough such that
$\lambda_{0}^{n_{0}}u\leq x\leq\lambda_{0}^{-n_{0}}u$ and $\lambda_{0}^{n_{0}%
}u\leq y\leq\lambda_{0}^{-n_{0}}u$. Let $x_{0}=\lambda_{0}^{n_{0}}u$ and
$y_{0}=\lambda_{0}^{-n_{0}}u$. Clearly, $x_{0},y_{0}\in P$, $x_{0}\leq y_{0}$
and $x,y\in\lbrack x_{0},y_{0}]$. By successively applying (\ref{eq:0711_04}%
)--(\ref{eq:0711_06}) several times and using the mixed monotonicity of $A$,
we have that%
\begin{align*}
x_{0} &  =\lambda_{0}^{n_{0}}u\leq\left(  \varphi(\lambda_{0})\right)
^{n_{0}}\lambda_{0}u\leq\varphi(\lambda_{0}^{n_{0}})A(u,u)\leq A(\lambda
_{0}^{n_{0}}u,\lambda_{0}^{-n_{0}}u)=A(x_{0},y_{0})\\
&  \leq A(x,y)\leq A(y_{0},x_{0})=A(\lambda_{0}^{-n_{0}}u,\lambda_{0}^{n_{0}%
}u)\leq\left(  \varphi(\lambda_{0}^{n_{0}})\right)  ^{-1}A(u,u)\\
&  \leq\left(  \lambda_{0}\left(  \varphi(\lambda_{0})\right)  ^{n_{0}%
}\right)  ^{-1}u\leq\lambda_{0}^{-n_{0}}u=y_{0},
\end{align*}
which shows that $(x_{0},y_{0})$ is a coupled lower-upper fixed point of $A$
and
\[
A(x,y)\in\lbrack x_{0},y_{0}]\subseteq P,
\]
hence $A(P\times P)\subseteq P$.\bigskip

Now, let $(x_{0},y_{0})$ be any coupled lower-upper fixed point of $A$. In
order to conclude the proof, it is enough to show that there exists $x^{\ast
}\in\lbrack x_{0},y_{0}]$ such that $(x_{0},y_{0}%
)\overset{A}{\rightrightarrows}x^{\ast}$, and the conclusion will follow from
Theorem \ref{th:0711_08}. In order to achieve this, let $(x_{n}),(y_{n})$ be
defined as in (\ref{eq:0711_00}), hence by Lemma \ref{th:0711_01},
\begin{equation}
x_{0}\leq x_{1}\leq\ldots\leq x_{n}\leq\ldots\leq y_{m}\leq\ldots\leq
y_{1}\leq y_{0} \label{eq:0711_09}%
\end{equation}
We break the proof in several steps.

First, it is clear that if $x_{k}=y_{k}$ for some $k$, then $\sup x_{n}=\inf
y_{n}=x_{k}$ and the proof is complete, hence one can assume that $x_{n}\neq
y_{n}$ for all $n\in\mathbb{N}$.

Next, let the sequence $\left(  \lambda_{n}\right)  $ be defined by
$\lambda_{n+1}=\varphi(\lambda_{n})$ for all $n\in\mathbb{N}$, where
$\lambda_{0}\in(0,1)$ is such that $x_{0}\geq\lambda_{0}y_{0}$ ($\lambda_{0}$
exists, since $x_{0},y_{0}$ are in the same part of $K$). We show by induction
that, for all $n\in\mathbb{N}$,%
\begin{equation}
\lambda_{n}\text{ is correctly defined,\quad}\lambda_{n}\in(0,1)\text{,\quad
}x_{n}\geq\lambda_{n}y_{n}\text{.}\label{eq:0711_10}%
\end{equation}
Clearly, these are satisfied for $n=0$. Now, assume these properties are true
for $n$. Then $\lambda_{n+1}=\varphi(\lambda_{n})\in(0,1]$ is correctly
defined (since $\lambda_{n}\in(0,1)$) and, by (\ref{eq:0711_04}),%
\[
x_{n+1}=A(x_{n},y_{n})\geq A(\lambda_{n}y_{n},y_{n})\geq\varphi(\lambda
_{n})A(y_{n},\lambda_{n}y_{n})\geq\varphi(\lambda_{n})A(y_{n},x_{n}%
)=\lambda_{n+1}y_{n+1}%
\]
hence $x_{n+1}\geq\lambda_{n+1}y_{n+1}$. Since $x_{n+1}\neq y_{n+1}$, it also
follows from here that $\lambda_{n+1}\neq1$, hence $\lambda_{n+1}\in(0,1)$,
which concludes the inductive proof.

Note also that $\lambda_{n}<\varphi(\lambda_{n})=\lambda_{n+1}$ for all
$n\in\mathbb{N}$. Following from here, we conclude that the sequence
$(\lambda_{n})$ is increasing, hence convergent to some $\lambda_{\ast}%
\in(0,1]$; we prove that $\lambda_{\ast}=1$. Assume that $\lambda_{\ast}\neq
1$. Clearly, $\lambda_{n}<\lambda_{\ast}$ for all $n\in\mathbb{N}$. Then, by
(\ref{eq:0711_04a}),%
\[
\lambda_{n+1}=\varphi(\lambda_{n})=\varphi\left(  \lambda_{\ast}\cdot
\frac{\lambda_{n}}{\lambda_{\ast}}\right)  \geq\varphi(\lambda_{\ast}%
)\varphi\left(  \frac{\lambda_{n}}{\lambda_{\ast}}\right)  >\varphi
(\lambda_{\ast})\frac{\lambda_{n}}{\lambda_{\ast}}\quad\text{for all }%
n\in\mathbb{N}%
\]
and by taking $n\rightarrow\infty$, we obtain that $\lambda_{\ast}\geq
\varphi(\lambda_{\ast})$, which is a contradiction. Concluding,
\begin{equation}
\lim_{n\rightarrow\infty}\lambda_{n}=1\text{.}\label{eq:0711_08}%
\end{equation}

We claim now that $(x_{n})$ is upper self-bounded. Indeed, let $\mu>1$ and, by
(\ref{eq:0711_08}), let $k\in\mathbb{N}$ such that $\lambda_{k}\geq\mu^{-1}$.
Then, by (\ref{eq:0711_09}) and (\ref{eq:0711_10}),%
\[
x_{n}\leq\mu\lambda_{k}x_{n}\leq\mu\lambda_{k}y_{k}\leq\mu x_{k}\quad\text{for
all }n\in\mathbb{N}\text{,}%
\]
which proves our claim.

Next, we use that $K$ is self-complete, hence there exists $x^{\ast}=\sup
x_{n}$.

Finally, we show that $x^{\ast}=\inf y_{n}$. Indeed, $x^{\ast}\leq y_{n}$ for
all $n\in\mathbb{N}$ (by (\ref{eq:0711_09})). Also, if $x\in X$ such that
$x\leq y_{n}$ for all $n\in\mathbb{N}$, then
\[
x\leq y_{n}\leq\frac{x_{n}}{\lambda_{n}}\leq\frac{x^{\ast}}{\lambda_{n}%
}\text{\quad for all }n\in\mathbb{N}%
\]
hence $x\leq x^{\ast}$, by (\ref{eq:0711_08}) and using that $K$ is
Archimedean. Concluding, $(x_{0},y_{0})\overset{A}{\rightrightarrows}x^{\ast}$
and the proof is now complete.
\end{proof}

\subsection*{Acknowledgement}

This work was supported by the Sectoral Operational Programme Human Resources
Development 2007-2013 of the Romanian Ministry of Labor, Family and Social
Protection through the Financial Agreement POSDRU/89/1.5/S/62557.



\end{document}